\numberwithin{equation}{section}
\theoremstyle{plain}
\newtheorem{thm}[equation]{Theorem}
\newtheorem{cor}[equation]{Corollary}
\newtheorem{lem}[equation]{Lemma}
\newtheorem{prop}[equation]{Proposition}
\newtheorem{definition}[equation]{Definition}
\newtheorem{exa}[equation]{Example}
\newtheorem{rem}[equation]{Remark}
\theoremstyle{definition}
\theoremstyle{remark}
\def\Q{{\mathbb Q}}
\def\Z{{\mathbb Z}}
\def\G{{\mathbb G}}
\def\A{{\mathbb A}}
\DeclareFontFamily{U}{wncy}{}
\DeclareFontShape{U}{wncy}{m}{n}{%
<5>wncyr5%
<6>wncyr6%
<7>wncyr7%
<8>wncyr8%
<9>wncyr9%
<10>wncyr10%
<11>wncyr10%
<12>wncyr6%
<14>wncyr7%
<17>wncyr8%
<20>wncyr10%
<25>wncyr10}{}
\DeclareMathAlphabet{\cyr}{U}{wncy}{m}{n}
\begin{document}

\title[Integral Points for Multi-norm Tori]
{Integral Points for Multi-norm Tori}

\author{Dasheng Wei$^1$}
\author{Fei Xu$^2$}

\address{$^1$ Academy of Mathematics and System Science,  CAS, Beijing
100190, P.R.China}

\email{dshwei@amss.ac.cn}

\address{$^2$ School of Mathematics, Capital Normal University,
Beijing 100048, P.R.China}

\email{xufei@math.ac.cn}

\date{\today}

\maketitle

 \bigskip

\section*{\it Abstract}

We construct a finite subgroup of Brauer-Manin obstruction for
detecting the existence of integral points on integral models of
principle homogeneous spaces of multi-norm tori. Several explicit
examples are provided.

\bigskip

{\it MSC classification} : 11D57, 11E72, 11G35, 11R37, 14F22, 14G25,
20G30


\bigskip

{\it Keywords} :  integral point, multi-norm torus, Galois
cohomology, Brauer-Manin obstruction.

\section*{Introduction} \label{sec.notation}

The integral points on homogeneous spaces of semi-simple and simply
connected linear algebraic groups of non-compact type were studied
by Borovoi and Rudnick in \cite{BR95} and by Colliot-Th\'el\`ene and
the second named author in \cite{CTX} by using the strong
approximation theorem and the Brauer-Manin obstruction. Recently,
Harari \cite{Ha08} showed that the Brauer-Manin obstruction accounts
for the nonexistence of integral points. Colliot-Th\'el\`ene noticed
that a finite subgroup of the Brauer group is enough to account for
the nonexistence of integral points by the compactness argument.
However this result is nonconstructive: one does not know which
finite subgroup to use and cannot use it to determine the existence
of integral points on the specific equations. In this paper, we give
some explicit construction for such finite subgroups for multi-norm
tori. The paper is also inspired by Colliot-Th\'el\`ene's suggestion
of studying Gauss' idea for determining integers represented by
positive definite binary quadratic forms, which is beautifully
explained by Cox in \cite{Co89}, from the point of view of
Brauer-Manin obstruction. The advantage of using Brauer-Manin
obstruction is to provide more perspective. For example, one can
determine the solvability of the negative Pell equations by using
the class field theory instead of the continued fractional method
(the quadratic Diophantine approximation).

The paper is organized as follows. In Section 1, we construct the
idele groups which are the so-called $\bf X$-admissible groups for
determining the integral points for some integral model $\bf X$. In
Section 2, we interpret the $\bf X$-admissible subgroup in terms of
finite Brauer-Manin obstruction and also explain that there is no
finite Brauer-Manin obstruction to detect all separated integral
models of finite type. In Section 3, we apply our construction to
study the classical binary quadratic Diophantine equations. Such
classical quadratic Diophantine equations have been studied for a
long time by various methods. This construction is the natural
extension of Gauss' method for determining integers represented by a
positive definite integral binary quadratic form and one can further
determine all integers represented by a given binary inhomogeneous
quadratic Diophantine equation. In Section 4, we provide several
examples of 1-dimensional non-split tori where the splitting fields
are imaginary quadratic fields. In Section 5, some examples of
1-dimensional non-split tori where the splitting fields are real
quadratic fields are studied. In Section 6, we explain how to apply
our construction to study the high dimensional multi-norm tori by
providing some more explicit examples.

Notation and terminology are standard if not explained. Let $F$ be a
number field, $\frak o_F$ be the ring of integers of $F$, $\Omega_F$
be the set of all primes in $F$ and $\infty_F$ be the set all
infinite primes in $F$. For simplicity, we write $\frak p<\infty_F$
for $\frak p\in \Omega_F\setminus \infty_{F}$. Let $F_\frak p$ be
the completion of $F$ at $\frak p$ and $\frak o_{F_\frak p}$ be the
local completion of $\frak o_F$ at $\frak p$ for each $\frak p\in
\Omega_F$. Write $\frak o_{F_\frak p}=F_\frak p$ for $\frak p\in
\infty_F$. For any positive integer, we use $(F^\times)^k$ (resp.
$({F_\frak p}^\times)^k$) to denote the subgroup of $k$-th powers in
$F^\times$ (resp. ${F_\frak p}^\times$). We also denote the adele
(resp. the idele) of $F$ by $\Bbb A_F$ (resp. $\Bbb I_F$) and
$$F_{\infty}=\prod_{\frak p\in \infty_F}F_\frak p .$$

Let $E_1, \cdots, E_n$ be the finite extensions of $F$ and
$$ \varphi: \ {R_{E_1/F}(\G_m)\times \cdots\times
R_{E_n/F}(\G_m)\longrightarrow \G_m}$$ be the homomorphism of
algebraic groups which represents
$$ (E_1 \otimes _F A)^\times  \times \cdots \times  (E_n \otimes _F
 A)^\times \rightarrow A^\times ; \ \ (x_1, \dots, x_n) \mapsto \prod_{i=1}^n N_{E_i/F}(x_i)  $$
for any $F$-algebra $A$.  Define $G:=ker \varphi$ which is called a
multi-norm torus over $F$. Let $\hat{G}=Hom_{\bar{F}}(G, \Bbb G_m)$
be the characters of $G$ where $\bar{F}$ is an algebraic closure of
$F$ and $\bf X$ be a separated $\frak o_F$-scheme of finite type
whose generic fiber $X_F$ is a principal homogeneous space of $G$.
The obvious necessary conditions for ${\bf X}(\frak o_F)\neq
\emptyset$ are
\begin{equation}\label{loc} \prod_{\frak p\in \Omega_F} {\bf
X}(\frak o_{F_\frak p})\neq \emptyset \ \ \ \text{and} \ \ \
X_F(F)\neq \emptyset
\end{equation} which is assumed throughout this paper. It should pointed out that the determination of
$X_F(F)\neq \emptyset$ or not is well understood (see \cite{BCS}).

The Brauer group $Br(X_F)$ of $X_F$ is defined as
$$Br(X_F)=H^2_{et}(X_F, \Bbb G_m) \ \ \ \text{and} \ \ \
Br_1(X_F)=ker[Br(X_F)\rightarrow Br(\bar{X})]$$ where
$\bar{X}=X_F\times_F \bar{F}$. Since the image of $Br(F)$ induced by
the structure morphism lies in $Br_1(X_F)$, one defines
$$Br_a(X_F)=coker[Br(F)\rightarrow Br_1(X)]. $$
For any subgroup $\frak s$ of $Br_a (X_F)$, one can define the
integral Brauer-Manin set with respect to $\frak s$ as (see
\cite{CTX})
$$(\prod_{\frak p\in \Omega_F}{\bf X}(\frak o_{F_\frak p}))^{\frak
s}=\{ (x_\frak p)\in \prod_{\frak p\in \Omega_F}{\bf X}(\frak
o_{F_\frak p}): \ \ \sum_{\frak p\in \Omega_F} inv_\frak p(s(x_\frak
p))=0, \ \ \ \forall s\in \frak s\}.$$

\bigskip

\section{ Construction of $\bf X$-admissible Groups  } \label{sec.adm}

Since $\bf X$ is separated over $\frak o_{F}$, one can view ${\bf
X}(\frak o_{F_\frak p})$ as an open subset of $X_F(F_\frak p)$ by
the natural map for any $\frak p<\infty_F$ and $G(F_\frak p)$ acts
on $X_F(F_\frak p)$ continuously.

\begin{definition}\label{stm} Let $$Stab({\bf X}(\frak o_{F_\frak p}))=\{ g\in
G(F_\frak p):  \ \ g{\bf X}(\frak o_{F_\frak p})={\bf X}(\frak
o_{F_\frak p})\}$$ for $\frak p<\infty_F$ and $$Stab({\bf X}(\frak
o_{F_\frak p}))=G(F_\frak p) \ \ \ \ \ \text{for \ $\frak p\in
\infty_F$}.
$$ Define

$$Stab_{\Bbb A}({\bf
X})= G(\Bbb A_F)\cap  [\prod_{\frak p\in \Omega_F}Stab({\bf X}(\frak
o_{F_\frak p}))].$$
\end{definition}

One has the following basic property of $Stab_{\Bbb A}({\bf X})$.

\begin{lem}\label{stab} $Stab_{\Bbb A}({\bf
X})$ is an open subgroup of $G(\Bbb A_F)$.
\end{lem}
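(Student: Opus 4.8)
The plan is to establish the two assertions separately: first that $Stab_{\Bbb A}({\bf X})$ is a subgroup of $G(\Bbb A_F)$, and then that it is open. For the second point I will only exhibit an open neighbourhood of the identity contained in $Stab_{\Bbb A}({\bf X})$; this is enough, because any subgroup of a topological group that contains an open neighbourhood of the identity is automatically open, being a union of translates of that neighbourhood. The subgroup statement is formal: for each $\frak p$ the set $Stab({\bf X}(\frak o_{F_\frak p}))$ is the stabiliser of a subset of $X_F(F_\frak p)$ under the group action of $G(F_\frak p)$, hence a subgroup of $G(F_\frak p)$; therefore $\prod_{\frak p\in\Omega_F} Stab({\bf X}(\frak o_{F_\frak p}))$ is a subgroup of $\prod_{\frak p} G(F_\frak p)$, and intersecting with the subgroup $G(\Bbb A_F)$ yields a subgroup.

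Next I would prove local openness at each $\frak p<\infty_F$. Since ${\bf X}$ is separated of finite type over $\frak o_F$ and $\frak o_{F_\frak p}$ is compact, the set $K:={\bf X}(\frak o_{F_\frak p})$ is a compact open subset of $X_F(F_\frak p)$; the action map $a\colon G(F_\frak p)\times X_F(F_\frak p)\to X_F(F_\frak p)$ is continuous. For each $x\in K$ one has $a(e,x)=x\in K$, so by continuity and openness of $K$ there exist open neighbourhoods $V_x\ni e$ and $W_x\ni x$ with $V_xW_x\subseteq K$. By compactness $K$ is covered by finitely many $W_{x_1},\dots,W_{x_n}$; set $V=\bigcap_{i=1}^n V_{x_i}$, an open neighbourhood of $e$. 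Then $gK\subseteq K$ for every $g\in V$, so for $g$ in the symmetric open neighbourhood $V\cap V^{-1}$ one has both $gK\subseteq K$ and $g^{-1}K\subseteq K$, which forces $gK=K$. Hence $V\cap V^{-1}\subseteq Stab(K)$, and $Stab({\bf X}(\frak o_{F_\frak p}))$ is open. For $\frak p\in\infty_F$ it equals $G(F_\frak p)$ by definition, so openness is trivial there.

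Finally I would pass to the global statement by spreading out. Choose a finite set $S\supseteq\infty_F$ and a flat separated group scheme $\mathcal G$ of finite type over $\frak o_{F,S}$ with generic fibre $G$, such that the action $G\times_F X_F\to X_F$ extends to a morphism $\mathcal G\times_{\frak o_{F,S}}\mathcal X\to\mathcal X$, where $\mathcal X={\bf X}\times_{\frak o_F}\frak o_{F,S}$; this is possible after enlarging $S$ by standard limit arguments. For $\frak p\notin S$ the group $\mathcal G(\frak o_{F_\frak p})$ then acts on $\mathcal X(\frak o_{F_\frak p})={\bf X}(\frak o_{F_\frak p})$, whence $\mathcal G(\frak o_{F_\frak p})\subseteq Stab({\bf X}(\frak o_{F_\frak p}))$. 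Combining this with the local openness above, the set $\prod_{\frak p\in S} Stab({\bf X}(\frak o_{F_\frak p}))\times\prod_{\frak p\notin S}\mathcal G(\frak o_{F_\frak p})$ is a basic open neighbourhood of the identity of $G(\Bbb A_F)$ (in the restricted-product topology with respect to the model $\mathcal G$) contained in $Stab_{\Bbb A}({\bf X})$, and the lemma follows.

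The main obstacle is the local openness, which rests entirely on $K={\bf X}(\frak o_{F_\frak p})$ being \emph{compact}: this is precisely where separatedness and finite type of ${\bf X}$ enter, and without it the finite subcover step collapses. I would record this compactness carefully, using that any $\frak o_{F_\frak p}$-point factors through a single affine open of ${\bf X}$ (because $\Spec\frak o_{F_\frak p}$ is local), thereby reducing to the case of a closed subscheme of affine space, where ${\bf X}(\frak o_{F_\frak p})$ is a closed subset of the compact set $\frak o_{F_\frak p}^{\,n}$.
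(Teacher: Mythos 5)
Your proof is correct and is essentially the paper's own argument: openness of each local stabilizer $Stab({\bf X}(\frak o_{F_\frak p}))$ at the finite places, followed by spreading out to a model ${\bf G}$ over the $S$-integers acting on ${\bf X}_S$, so that ${\bf G}(\frak o_{F_\frak p})\subseteq Stab({\bf X}(\frak o_{F_\frak p}))$ for all $\frak p\notin S$, whence $Stab_{\Bbb A}({\bf X})$ contains a basic open subgroup of $G(\Bbb A_F)$. The one place where you do more is the local step: the paper dismisses it with the assertion that the stabilizer of an open set under a continuous action is open, which taken literally fails without compactness (e.g.\ translation of $\Bbb R$ on $\Bbb R$ and the open set $(0,1)$), and your finite-subcover argument using the compactness of ${\bf X}(\frak o_{F_\frak p})$, together with the symmetrization $V\cap V^{-1}$, supplies exactly the justification that claim needs.
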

\begin{proof} Since the stabilizer of an open subset is open by the continuality,
 one obtains that $Stab({\bf X}(\frak o_{F_\frak p}))$ is an open
 subgroup of $G(F_\frak p)$ for all $\frak p\in \Omega_F$.

 Let $S$ be a finite subset of $\Omega_F$ containing $\infty_F$ such
 that $G$ has a model $\bf G$ over $\frak o_S$ and the morphism $G\times_F X_F \rightarrow X_F$ can
 be extended to ${\bf G}\times_{\frak o_S}{\bf X}_S \rightarrow {\bf X}_S $ where $\frak o_S$ is the $S$-integers of $F$
 and ${\bf X}_S={\bf X}\times_{\frak o_F} \frak
 o_S$. This implies
 that $$Stab({\bf X}(\frak o_{F_\frak p}))\supseteq {\bf G}(\frak o_{F_\frak
 p})  \ \ \ \text{for all $\frak p\not\in S$}. $$ Therefore one
 concludes that $Stab_{\Bbb A}({\bf
X})$ is an open subgroup of $G(\Bbb A_F)$.
\end{proof}

Since $G$ is the subgroup of $R_{E_1/F}(\G_m)\times \cdots\times
R_{E_n/F}(\G_m)$ by the definition of $G$, one has the injective map
$$\lambda_{(E_1,\dots,E_n)}: \ \  G(A) \longrightarrow (E_1 \otimes _F A)^\times  \times \cdots \times  (E_n \otimes _F
 A)^\times  $$ for any $F$-algebra $A$.

\begin{definition}\label{admmodel} An open subgroup $\Xi$ of $\prod_{i=1}^n \Bbb
I_{E_i}$ is called $\bf X$-admissible if
$$\lambda_{(E_1,\dots,E_n)}[Stab_\A({\bf X})]\subseteq \Xi$$ and the map induced by $\lambda_{(E_1,\dots,E_n)}$
$$ \lambda_{(E_1,\dots,E_n)}: \ \ \ G(\Bbb A_F)/G(F)Stab_\A({\bf X}) \longrightarrow
\prod_{i=1}^n \Bbb I_{E_i}/(\prod_{i=1}^n E_i^\times)\cdot \Xi $$ is
injective.

\end{definition}

Since $X_F$ is a trivial $G$-torsor over $F$, one can fix an
isomorphism $X_F\cong G$ induced by a rational point $P$. Combining
with $\lambda_{(E_1,\dots,E_n)}$, one can define the injective map
$$f_{(E_1,\dots, E_n)}: \ \ X_{F}(A)
\cong G(A) \xrightarrow{\lambda_{(E_1,\cdots,E_n)}}  \prod_{i=1}^n
(E_i\otimes_F A)^\times
$$ for any $F$-algebra $A$.

Since $\bf X$ is separated over $\frak o_F$, one can regard
$\prod_{\frak p\in \Omega_F} \bold X(\frak o_{F_\frak p})$ as the
subset of $X_F(\Bbb A_F)$ by the natural morphism. Then
$$f_{(E_1,\dots, E_n)}[\prod_{\frak p\in \Omega_F} \bold X(\frak o_{F_\frak p})]\subseteq \prod_{i=1}^n \Bbb I_{E_i}. $$

The following proposition explains the reason for introducing the
concept of $\bf X$-admissible subgroups.

\begin{prop} \label{mainmodel} Let $\Xi$ be an $\bf X$-admissible subgroup of
$\prod_{i=1}^n \Bbb I_{E_i}$. Then
$$\bold X(\frak o_F) \neq \emptyset \ \ \ \text{if and only if} \ \
\ f_{(E_1,\dots, E_n)}[\prod_{\frak p\in \Omega_F} \bold X(\frak
o_{F_\frak p})]\cap [(\prod_{i=1}^n E_i^\times )\cdot \Xi] \neq
\emptyset.$$
\end{prop}
\begin{proof} Since $\bold X$ is separated over $\frak o_F$, one has
$\bold X(\frak o_F)\subseteq \bold X(\frak o_{F_\frak p})$ for all
$\frak p\in \Omega_F$ and
$$\bold X(\frak o_F) \subseteq \prod_{\frak p\in \Omega_F} \bold X(\frak o_{F_\frak
p})$$ by the diagonal map. If $\bold X(\frak o_F)\neq \emptyset$,
then
$$f_{(E_1,\dots,E_n)}[\prod_{\frak p\in \Omega_F} \bold X(\frak o_{F_\frak p})]\cap
[(\prod_{i=1}^n E_i^\times )\cdot \Xi]\supseteq f_{(E_1,\dots,
E_n)}[\bold X(\frak o_F)]\cap (\prod_{i=1}^n E_i^\times ) \neq
\emptyset$$ and the necessity follows.

Conversely, assume that  $$y_A \in \prod_{\frak p\in \Omega_F} \bold
X(\frak o_{F_{\frak p}}) \ \ \text{ such that } \ \
f_{(E_1,\dots,E_n)}(y_A)\in (\prod_{i=1}^n E_i^\times )\cdot \Xi .$$
By Definition \ref{admmodel}, there are $\varrho \in G(F)$ and
$\sigma_A\in Stab_\A({\bf X})$ such that $y_A=\varrho \sigma_A (P)$
by injectivity of $f_{(E_1,\dots,E_n)}$. This implies that $$\varrho
(P)=\sigma_A^{-1} (y_A)\in \prod_{\frak p\in \Omega_F} \bold X(\frak
o_{F_\frak p}).
$$ Therefore $\varrho (P) \in \bold X(\frak o_F)\neq \emptyset$  and
the proof is complete.
\end{proof}

If $\Xi$ is an $\bf X$-admissible subgroup of $\prod_{i=1}^n \Bbb
I_{E_i}$, there is an open subgroup $\Xi_i$ of $\Bbb I_{E_i}$ for
each $1\leq i\leq n$ such that $\prod_{i=1}^n \Xi_i \subseteq \Xi$.
By the class field theory, there is a finite abelian extension
$K_{\Xi_i}/E_i$ such that the Artin map
\begin{equation}\label{artinmulti} \psi_{K_{\Xi_i}/E_i}: \ \ \ \Bbb
I_{E_i}/E_i^\times \Xi_i \cong Gal(K_{\Xi_i}/E_i)
\end{equation} gives the isomorphism for $1\leq i\leq n$. Projecting
the image of $f_{(E_1,\dots,E_n)}$ to $\Bbb I_{E_i}$, one can define
$$f_{E_i}: \ \ \prod_{\frak p\in \Omega_F} \bold X(\frak o_{F_\frak p}) \longrightarrow \prod_{j=1}^n \Bbb I_{E_j}
\longrightarrow \Bbb I_{E_i} $$ for $1\leq i\leq n$.

\begin{cor}\label{efmulti} With the notation as above,
$\bold X(\frak o_F)\neq \emptyset$ if and only if there is
$$x_A\in\prod_{\frak p\in \Omega_F} \bold X(\frak o_{F_\frak p}) \ \ \ \text{such that} \ \ \
\psi_{K_{\Xi_i}/E_i}(f_{E_i}(x_A))=1 $$ in $Gal(K_{\Xi_i}/E_i)$ for
all $1\leq i\leq n$.
\end{cor}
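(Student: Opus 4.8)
The plan is to reformulate Theorem \ref{mainmodel} in terms of Artin symbols, in the same spirit as Corollary \ref{effective} reformulates Theorem \ref{main}; the one new feature is that the admissible group $\Xi$ now lives in a product of idele groups and has been sandwiched between $\prod_{i=1}^n \Xi_i$ and itself. First I would record the dictionary supplied by class field theory: for each $i$, the isomorphism (\ref{artinmulti}) says that $\psi_{K_{\Xi_i}/E_i}(y_i)=1$ holds for $y_i\in\Bbb I_{E_i}$ if and only if $y_i\in E_i^\times\Xi_i$. Since $f_{E_i}$ is by construction the $i$-th component of $f_{(E_1,\dots,E_n)}$, applying this to $y_i=f_{E_i}(x_A)$ and intersecting over $1\leq i\leq n$ shows that the condition that every $\psi_{K_{\Xi_i}/E_i}(f_{E_i}(x_A))$ is trivial is equivalent to the membership $f_{(E_1,\dots,E_n)}(x_A)\in\prod_{i=1}^n(E_i^\times\Xi_i)=(\prod_{i=1}^n E_i^\times)\cdot(\prod_{i=1}^n\Xi_i)$.

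For the sufficiency direction, suppose such an $x_A$ exists. By the dictionary above, $f_{(E_1,\dots,E_n)}(x_A)$ lies in $(\prod_{i=1}^n E_i^\times)\cdot(\prod_{i=1}^n\Xi_i)$, and since $\prod_{i=1}^n\Xi_i\subseteq\Xi$ by the choice of the $\Xi_i$, this element lies in the larger set $(\prod_{i=1}^n E_i^\times)\cdot\Xi$. Thus $f_{(E_1,\dots,E_n)}[\prod_{\frak p\in\Omega_F}\bold X(\frak o_{F_\frak p})]\cap[(\prod_{i=1}^n E_i^\times)\cdot\Xi]\neq\emptyset$, and Theorem \ref{mainmodel} yields $\bold X(\frak o_F)\neq\emptyset$.

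For the necessity direction I would not route through Theorem \ref{mainmodel}, whose conclusion only places the image in the larger group $(\prod_{i=1}^n E_i^\times)\cdot\Xi$, but argue directly with a global point. If $\bold X(\frak o_F)\neq\emptyset$, fix $P_0\in\bold X(\frak o_F)$; since $\bf X$ is separated over $\frak o_F$, its diagonal image $x_A=(P_0)_{\frak p\in\Omega_F}$ lies in $\prod_{\frak p\in\Omega_F}\bold X(\frak o_{F_\frak p})$. The map $f_{(E_1,\dots,E_n)}$ factors through $X_F(\Bbb A_F)\cong G(\Bbb A_F)$, and at a global point its value lies in the image of $G(F)$; as $\lambda_{(E_1,\dots,E_n)}$ carries $G(F)$ into $\prod_{i=1}^n E_i^\times$ embedded as principal ideles, each component $f_{E_i}(x_A)$ lies in $E_i^\times\subseteq E_i^\times\Xi_i$. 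Hence $\psi_{K_{\Xi_i}/E_i}(f_{E_i}(x_A))=1$ for every $i$, which is what the necessity direction requires.

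The one point I would watch most carefully is the asymmetry introduced by the possibly strict inclusion $\prod_{i=1}^n\Xi_i\subseteq\Xi$: the vanishing of the Artin symbols corresponds to membership in the smaller group $(\prod_{i=1}^n E_i^\times)\cdot(\prod_{i=1}^n\Xi_i)$ rather than in $(\prod_{i=1}^n E_i^\times)\cdot\Xi$. This is harmless for sufficiency, where the smaller group is contained in the larger one, but it is precisely why the necessity direction must exhibit a genuine global integral point, whose image lands in the diagonal $\prod_{i=1}^n E_i^\times$ and hence in both groups, rather than merely quoting the existence assertion of Theorem \ref{mainmodel}.
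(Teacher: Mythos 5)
Your proposal is correct and follows essentially the same route as the paper: translate the vanishing of the Artin symbols into $f_{(E_1,\dots,E_n)}(x_A)\in(\prod_{i=1}^n E_i^\times)\cdot(\prod_{i=1}^n\Xi_i)\subseteq(\prod_{i=1}^n E_i^\times)\cdot\Xi$ and then run the admissibility argument of Theorem \ref{mainmodel}. Your careful treatment of necessity via the diagonal image of a genuine integral point (rather than the bare statement of Theorem \ref{mainmodel}) is precisely what the paper's phrase ``the same argument as those in Theorem \ref{main}'' refers to, since the necessity half of that argument places the image in $\prod_{i=1}^n E_i^\times$ itself.
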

\begin{proof} Since $\psi_{K_{\Xi_i}/E_i}(f_{E_i}(x_A))=1$ for $1\leq i\leq n$, one
has $$f_{(E_1,\dots,E_n)}(x_A)\in (\prod_{i=1}^n E_i^\times) \cdot
(\prod_{i=1}^n \Xi_i)\subseteq (\prod_{i=1}^n E_i^\times)\cdot \Xi
.$$ Then the result follows from the same argument as those in Prop.
\ref{mainmodel}.
 \end{proof}

The main purpose in this section is to construct such $\bf
X$-admissible groups. First we need to the following lemma.

\begin{lem}\label{openmodel} If $C_\frak p$ is an open subgroup of $G(F_\frak p)$
and $k$ is a positive integer, then
$$\lambda_{(E_1,\dots,E_n)}(C_\frak p) \cdot \{(a, \dots, a): \ a \in (F_\frak
p^\times)^{k}\}
$$ is an open subgroup of $\prod_{i=1}^n {E_i}_\frak p^\times$ for
$\frak p$-adic topology with $\frak p<\infty_F$.
\end{lem}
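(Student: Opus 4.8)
The plan is to follow the proof of Lemma~\ref{open}, with the degree $2$ of the quadratic extension replaced by the total degree $D=\sum_{i=1}^n[E_i:F]$. Write $H=\lambda_{(E_1,\dots,E_n)}(C_\frak p)\cdot\{(a,\dots,a):a\in(F_\frak p^\times)^k\}$ for the set in question. First I would note that $H$ is actually a subgroup of the abelian group $\prod_{i=1}^n{E_i}_\frak p^\times$: it is the product of the image $\lambda_{(E_1,\dots,E_n)}(C_\frak p)$ of the subgroup $C_\frak p$ under a group homomorphism and the diagonally embedded subgroup $(F_\frak p^\times)^k$. Since a subgroup of a topological group is open as soon as it contains an open neighbourhood of the identity, it suffices to find an integer $M$ with $\prod_{i=1}^n(1+\frak p^M\frak o_{{E_i}_\frak p})\subseteq H$.

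The crucial point is the value of the norm on the diagonal: for $a\in F_\frak p^\times$ regarded diagonally in ${E_i}_\frak p^\times$ one has $N_{E_i/F}(a)=a^{[E_i:F]}$, so for any $x=(x_1,\dots,x_n)$ and $a\in F_\frak p^\times$ the tuple $c=(x_1a^{-1},\dots,x_na^{-1})$ satisfies $\prod_{i=1}^nN_{E_i/F}(c_i)=N(x)\,a^{-D}$, where $N(x)=\prod_{i=1}^nN_{E_i/F}(x_i)\in F_\frak p^\times$. Hence $c\in G(F_\frak p)$ exactly when $a^D=N(x)$. Now, when $x$ lies close to the identity, $N(x)$ is close to $1$ by continuity of the norms, so by Hensel's lemma I can choose $b\in F_\frak p^\times$ close to $1$ with $b^{kD}=N(x)$; putting $a=b^k\in(F_\frak p^\times)^k$ then gives $a^D=N(x)$, so that $c\in G(F_\frak p)$ and $x=\lambda_{(E_1,\dots,E_n)}(c)\cdot(a,\dots,a)$.

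It remains to fix the thresholds so that in fact $c\in C_\frak p$. Since $C_\frak p$ is open in $G(F_\frak p)$, fix $N_0$ with $\{c\in G(F_\frak p):c_i\in 1+\frak p^{N_0}\frak o_{{E_i}_\frak p}\text{ for all }i\}\subseteq C_\frak p$. By continuity of the norm maps, choosing $M$ large forces $N(x)\in 1+\frak p^{M_1}\frak o_{F_\frak p}$ with $M_1$ as large as desired; for $M_1$ large, Hensel's lemma yields the $kD$-th root $b$ in $1+\frak p^{M_2}\frak o_{F_\frak p}$ with $M_2$ again large, whence $a=b^k$ is close to $1$ as well. Taking $M$ so that $\min(M,M_2)\geq N_0$ guarantees each $c_i=x_ia^{-1}\in 1+\frak p^{N_0}\frak o_{{E_i}_\frak p}$, so $c\in C_\frak p$ and $x\in H$. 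I expect the Hensel step to be the main obstacle: one must ensure the $kD$-th root $b$ of $N(x)$ can be taken near $1$ and then propagate this closeness through $a=b^k$ so that the corrected tuple $c$ lands inside the prescribed $C_\frak p$. This is precisely the role of the inclusion $(1+\frak p^m\frak o_{E_\frak p})\subseteq(1+\frak p^n\frak o_{E_\frak p})^{2k}$ in Lemma~\ref{open}, with the exponent $2k=kD$ (the case $n=1$, $[E:F]=2$) and the explicit factorization $y^{2k}=(y(\tau y)^{-1})^k(y(\tau y))^k$ there replaced by the norm equation $a^D=N(x)$.
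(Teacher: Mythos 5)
Your proof is correct, and it takes a mildly but genuinely different route from the paper's. The paper transposes the proof of Lemma~\ref{open} componentwise: it applies Hensel's lemma in each extension ${E_i}_\frak p$ to write $x_i=y_i^{lk}$ with $y_i\in 1+\frak p^{a_i}\frak o_{{E_i}_{\frak p}}$ (its $l=\sum_i[E_i:F]$ is your $D$), and then uses the factorization $x_i=[y_i^{lk}(\prod_j N_{E_j/F}(y_j))^{-k}]\cdot[\prod_j N_{E_j/F}(y_j)]^{k}$: the diagonal factor is manifestly a $k$-th power in $F_\frak p^\times$, and the bracketed correction has product-norm $N^{lk}N^{-k\sum_i l_i}=1$, hence lies in $\lambda_{(E_1,\dots,E_n)}(C_\frak p)$. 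You never extract roots of the $x_i$ themselves: you apply Hensel once, in the base field $F_\frak p$, to get a $kD$-th root $b$ of the product-norm $N(x)$, and correct by the diagonal element $a=b^k$; your equation $a^D=N(x)$ is the same norm computation, but solved for the diagonal factor rather than produced by componentwise root extraction. What each buys: the paper's argument is a verbatim generalization of Lemma~\ref{open}, whose factorization $y^{2k}=(y(\tau y)^{-1})^k(y\tau y)^k$ becomes the bracketed decomposition above; yours is more economical, with a single root extraction in $F_\frak p$ instead of $n$ of them in the ${E_i}_\frak p$, and a single threshold $N_0$ for the bookkeeping. Your version also sidesteps a small imprecision in the paper: there, the correction $y_i^{lk}N^{-k}$ a priori lies only in $1+\frak p^{\min_j a_j}\frak o_{{E_i}_{\frak p}}$ rather than $1+\frak p^{a_i}\frak o_{{E_i}_{\frak p}}$, so one must first replace all the $a_i$ by their maximum for the openness inclusion to apply as written; your single-threshold setup has no such issue. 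Both proofs share the same skeleton: the product set is a subgroup of the abelian group $\prod_i {E_i}_\frak p^\times$, so openness reduces to exhibiting a small subgroup $\prod_i(1+\frak p^{M}\frak o_{{E_i}_{\frak p}})$ inside it.
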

\begin{proof} Since $C_\frak p$ is an open subgroup of $G(F_\frak
p)$, there are positive integers $a_1, \dots, a_n$ such that
$$[\prod_{i=1}^n(1+\frak p^{a_i} \frak o_{{E_i}_{\frak p}})]\cap \lambda_{(E_1,\dots,E_n)}(G(F_\frak p))
\subseteq \lambda_{(E_1,\dots,E_n)}(C_\frak p).
$$ By Hensel's lemma, there is a sufficiently large positive integer
$b_i>a_i$ such that $$(1+\frak p^{b_i} \frak o_{{E_i}_{\frak
p}})\subseteq (1+\frak p^{a_i} \frak o_{{E_i}_{\frak p}})^{lk}$$
where $l=\sum_{i=1}^n l_i$ and $l_i=[E_i:F]$ for all $1\leq i\leq
n$.

If  $$(x_i)_{i=1}^n \in \prod_{i=1}^n (1+\frak p^{b_i} \frak
o_{{E_i}_{\frak p}}) , $$ then
$$(x_i)_{i=1}^n=(y_i^{lk})_{i=1}^n \ \ \ \text{with  \ \ \
$(y_i)_{i=1}^n \in \prod_{i=1}^n(1+\frak p^{a_i} \frak
o_{{E_i}_{\frak p}})$} .$$ Since
$$y_i^{lk}=[y_i^{lk} (\prod_{i=1}^nN_{E_i/F}(y_i))^{-k}]
\cdot [\prod_{i=1}^nN_{E_i/F}(y_i)]^k$$ and
$$([y_i^{lk}(\prod_{i=1}^nN_{E_i/F}(y_i))^{-k}])_{i=1}^n \in
\lambda_{(E_1,\dots,E_n)}(C_\frak p),$$ one concludes that
$$\prod_{i=1}^n (1+\frak p^{b_i} \frak o_{{E_i}_{\frak p}})\subseteq
\lambda_{(E_1,\dots,E_n)}(C_\frak p) \cdot \{(a, \dots, a): \ a \in
(F_\frak p^\times)^{k}\}
$$ and the proof is complete.
\end{proof}

In order to prove the existence of $\bf X$-admissible groups, we
further need the following lemma.

\begin{lem} \label{power} Let $k$ be a positive integer. Then there is
a finite subset $T_0$ of $\Omega_F\setminus \infty_F$ such that
$$F^\times \cap [\prod_{\frak p\in T} (F_\frak p^\times)^{2kh_F} \times \prod_{\frak p\not\in T}\frak o_{F_\frak
p}^\times]
 \subseteq (F^\times)^k $$ for any $T\supseteq T_0$, where $h_F$ is the class number of $F$.
\end{lem}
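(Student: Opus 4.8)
The plan is to reduce the statement to a question about units via the ideal class group, and then to detect non-$k$-th-powers locally at finitely many finite primes. Let $\alpha\in F^\times\cap[\prod_{\frak p\in T}(F_\frak p^\times)^{2kh_F}\times\prod_{\frak p\notin T}\frak o_{F_\frak p}^\times]$. First I would read off the divisor of $\alpha$: at each finite $\frak p\notin T$ one has $v_\frak p(\alpha)=0$, while at $\frak p\in T$ the condition $\alpha\in(F_\frak p^\times)^{2kh_F}$ forces $v_\frak p(\alpha)\equiv 0\pmod{2kh_F}$. Hence the fractional ideal $(\alpha)$ equals $\frak b^{2kh_F}$ for the ideal $\frak b=\prod_{\frak p\in T}\frak p^{\,v_\frak p(\alpha)/(2kh_F)}$, which is supported on $T$. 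Since $h_F$ is the class number, $\frak b^{h_F}$ is principal, say $\frak b^{h_F}=(\gamma)$, and therefore $(\alpha)=(\gamma^{2k})$, so that $\alpha=u\gamma^{2k}$ with $u\in\frak o_F^\times$. Because $\gamma^{2k}=(\gamma^2)^k\in(F^\times)^k$, we have $\alpha\in(F^\times)^k$ if and only if $u\in(F^\times)^k$, and as $u$ is a unit this is the same as $u\in(\frak o_F^\times)^k$. Thus the exponent $2kh_F$ is exactly what makes both the extraction of the $2k$-th power ideal (via $h_F$) and the passage to a $k$-th power possible, and reduces the lemma to the units $u$.

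Next I would extract the local information on $u$. At each $\frak p\in T$ the same decomposition gives $u=\alpha\gamma^{-2k}\in(F_\frak p^\times)^k$, since $\alpha\in(F_\frak p^\times)^{2kh_F}\subseteq(F_\frak p^\times)^k$ and $\gamma^{2k}\in(F_\frak p^\times)^k$. By Dirichlet's unit theorem $\frak o_F^\times$ is finitely generated, so $\frak o_F^\times/(\frak o_F^\times)^k$ is finite; I fix representatives $u_1,\dots,u_r$ of its nontrivial classes, each satisfying $u_i\notin(F^\times)^k$. The plan is to choose, for every $i$, a finite prime $\frak q_i$ at which $u_i$ fails to be a local $k$-th power, and to set $T_0=\{\frak q_1,\dots,\frak q_r\}$. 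A detecting prime for $u_i$ automatically detects its entire coset: if $u\equiv u_i\pmod{(\frak o_F^\times)^k}$ then $u/u_i\in(\frak o_F^\times)^k\subseteq(F_{\frak q_i}^\times)^k$, so $u\notin(F_{\frak q_i}^\times)^k$ as well.

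Granting such a $T_0$, the lemma follows: for any $T\supseteq T_0$ the unit $u$ attached to $\alpha$ lies in $(F_\frak p^\times)^k$ for all $\frak p\in T_0$, so it cannot be congruent modulo $(\frak o_F^\times)^k$ to any $u_i$, whence $u\in(\frak o_F^\times)^k$ and $\alpha\in(F^\times)^k$. The hard part will be the existence of the detecting primes $\frak q_i$, i.e. the assertion that a unit which is not a global $k$-th power fails to be a local $k$-th power at some finite prime; equivalently that the map $F^\times/(F^\times)^k\to\prod_{\frak p<\infty_F}F_\frak p^\times/(F_\frak p^\times)^k$ is injective. This is where the Grunwald--Wang phenomenon must be confronted: when $8\mid k$ the analogous map that omits the dyadic primes can have a kernel of order $2$ generated by a special element (for instance $16$ when $F=\Q$ and $k=8$). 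The key point is that this special element is not a local $k$-th power at the primes above $2$, so once the finitely many primes above $2$ are admitted among the $\frak q_i$ the map over all finite primes is injective and the detecting primes exist; the extra factor $2$ in $2kh_F$ supplies the $2$-primary room that makes this safe. For this step I would invoke the Grunwald--Wang theorem in the form given in \cite{NSW}, and verify the coset-detection claim of the previous paragraph directly.
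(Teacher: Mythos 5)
Your reduction to units is sound and is essentially the paper's own first step: both arguments use the class number to write $\alpha=u\gamma^{2k}$ with $u\in\frak o_F^\times$, so everything hinges on producing, for each class of units that is not a global $k$-th power, a finite prime detecting this. That is exactly where your argument breaks. The claim that, once the dyadic primes are allowed among the $\frak q_i$, the map $F^\times/(F^\times)^k\to\prod_{\frak p<\infty_F}F_\frak p^\times/(F_\frak p^\times)^k$ is injective is false over general number fields. The standard counterexample (Wang; Artin--Tate) is $F=\Q(\sqrt{7})$, $k=8$: since $-7\equiv 1\bmod 8$ is a square in $\Q_2$, the dyadic completion is $\Q_2(\sqrt{7})=\Q_2(\sqrt{-1})$, which contains $1+\sqrt{-1}$, and $16=(1+\sqrt{-1})^8$; at every odd prime $p$ one of $-1,2,-2$ is a square in $\Q_p$, so $16$ is an $8$-th power there too; yet $16\notin(F^\times)^8$ because none of $\sqrt{2},\sqrt{-2},\sqrt{-1}$ lies in $F$. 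Worse for your purposes, the failure persists for units, which is what you actually need: from $(3+\sqrt{7})^2=2(8+3\sqrt{7})$ one gets $(8+3\sqrt{7})^4=(3+\sqrt{7})^8/16$, so the unit $(8+3\sqrt{7})^4$ is an $8$-th power in every completion of $F$ (finite and infinite) but not in $F$. Hence its class in $\frak o_F^\times/(\frak o_F^\times)^8$ is nontrivial but admits no detecting prime at all, and your $T_0$ cannot be constructed. Adjoining the primes above $2$ cures the Grunwald--Wang special case over $\Q$, but not in general.

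This is also why the exponent in the lemma is $2kh_F$ rather than $kh_F$ --- a factor of $2$ your argument never genuinely uses, since you only retain $u\in(F_\frak p^\times)^k$. The paper's proof detects unit classes modulo $(\frak o_F^\times)^{2k}$, so the undetectable units are precisely those that are local $2k$-th powers at every finite prime; for such an element the Grunwald--Wang theorem ((9.1.3) of \cite{NSW}, applied to $\mu_{2k}$, whose conclusion in the special case is only $x^2\in(F^\times)^{2k}$, i.e.\ $x=\pm y^k$) leaves a sign ambiguity, and the case $x=-y^k$ is eliminated separately: it forces $-1$ to be a $k$-th power in every $F_\frak p$, hence $\zeta_{2^{s+1}}\in F_\frak p$ for all $\frak p<\infty_F$ (writing $k=2^sk_1$ with $k_1$ odd), hence $\zeta_{2^{s+1}}\in F$ by Chebotarev, and then $-1=(\zeta_{2^{s+1}})^k$, so $x\in(F^\times)^k$ after all. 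To repair your write-up you would have to replace detection modulo $k$-th powers by detection modulo $2k$-th powers and add this Grunwald--Wang-plus-Chebotarev endgame; at that point it coincides with the paper's proof.
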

\begin{proof} Since $\frak o_F^\times/(\frak o_F^\times)^{2k}$ is finite by the Dirichlet unit theorem,
there are only finitely many cosets $\alpha_i (\frak
o_F^\times)^{2k}$ of $\frak o_F^\times/(\frak o_F^\times)^{2k}$ such
that $\alpha_i\not\in (F_{\frak p_i}^\times)^{2k}$ for some prime
$\frak p_i <\infty_F$. Fix one such prime $\frak p_i$ for each
$\alpha_i (\frak o_F^\times)^{2k}$ and define $T_0$ to be the set
consisting of all such $\frak p_i$. It is possible that $T_0$ is
empty.

 Let
$$ \alpha \in F^\times \cap [\prod_{\frak p\in T} (F_\frak
p^\times)^{2kh_F} \times \prod_{\frak p\not\in T}\frak o_{F_\frak
p}^\times] $$ for any finite set $T\supseteq T_0$. Since $\frak
p^{h_F}$ is a principal ideal, there is $\varpi_\frak p\in F$ such
that $\frak p^{h_F}=\varpi_\frak p\frak o_F$ for each finite $\frak
p\in T$. There is $t_\frak p\in \Bbb Z$ for each finite $\frak p\in
T$ such that
$$\beta=\alpha ( \prod_{\frak p\in T\setminus \infty_F} \varpi_\frak p^{t_\frak p})^{2k} \in
\frak o_F^\times .$$ Then $\beta(\frak o_F^\times)^{2k}$ is not one
of the above mentioned cosets $\alpha_i (\frak o_F^\times)^{2k}$.
This implies that
$$ \alpha \in F^\times \cap [ \prod_{\frak p\in \infty_F}F_\frak
p^\times \times \prod_{\frak p\not \in \infty_F} (F_\frak
p^\times)^{2k} ] .
$$ Therefore
$$F^\times \cap [\prod_{\frak p\in T} (F_\frak
p^\times)^{2kh_F} \times \prod_{\frak p\not\in T}\frak o_{F_\frak
p}^\times] \subseteq F^\times \cap [ \prod_{\frak p\in
\infty_F}F_\frak p^\times \times \prod_{\frak p\not \in \infty_F}
(F_\frak p^\times)^{2k} ] .$$

Let $$x\in F^\times \cap [ \prod_{\frak p\in \infty_F}F_\frak
p^\times \times \prod_{\frak p\not \in \infty_F} (F_\frak
p^\times)^{2k} ] .$$ Applying (9.1.3) Theorem in \cite{NSW} for
$\mu_{2k}$, one obtains that $x^2\in (F^\times)^{2k}$. There is
$y\in F^\times$ such that $x=y^k$ or $x=-y^k$.

Suppose $x=-y^k$. Let $k=2^s k_1$ with $2\nmid k_1$ and
$\zeta_{2^{s+1}}$ be a primitive $2^{s+1}$-th roots of unity. Then
$\zeta_{2^{s+1}}\in F_\frak p^\times$ for all $\frak p<\infty_F$. By
the Chebotarev density theorem, one concludes that
$\zeta_{2^{s+1}}\in F$. Therefore $-1=(\zeta_{2^{s+1}})^k \in
(F^\times)^k$ and the proof is complete.
\end{proof}

The proof of Lemma \ref{power} also provide the explicit method to
determine the set $T_0$. For example, when $F=\Bbb Q$, one has
$T_0=\{2\}$ or $\{p\}$ with $p\equiv 3 \mod 4$. The following
corollary has its own independent interest.

\begin{cor}
For any open subgroup $\Xi$ of $\Bbb I_F$ and a positive integer
$k$, there is an open subgroup $\Xi_k$ of $\Bbb I_F$ such that
$(F^\times \cap \Xi_k)\subseteq (F^\times \cap \Xi)^k $.
\end{cor}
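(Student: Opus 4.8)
The plan is to reduce the statement to the finiteness of an $S$-unit quotient, use Lemma \ref{power} to dispose of the global $k$-th powers, and then remove the finitely many remaining bad cosets by local conditions. First I would reduce to the case where $\Xi$ is a basic open subgroup $\prod_{\frak p\in S}U_\frak p\times\prod_{\frak p\notin S}\frak o_{F_\frak p}^\times$, for a finite $S\supseteq\infty_F$ and open subgroups $U_\frak p\subseteq F_\frak p^\times$: every open subgroup contains one of this form, and replacing $\Xi$ by a smaller open subgroup $\Xi'$ only shrinks the right-hand side, $(F^\times\cap\Xi')^k\subseteq(F^\times\cap\Xi)^k$, so a choice of $\Xi_k$ that works for $\Xi'$ also works for $\Xi$. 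Writing $H=F^\times\cap\Xi$, every element of $H$ is a unit outside $S$, so $H$ is contained in the $S$-unit group $\frak o_{F,S}^\times$, which is finitely generated by the Dirichlet $S$-unit theorem. Hence $H$ is finitely generated and $H^k=(F^\times\cap\Xi)^k$ has finite index in $H$.

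Next I would apply Lemma \ref{power} to obtain an open subgroup $W$ of $\Bbb I_F$ with $F^\times\cap W\subseteq(F^\times)^k$, and take as a first approximation $\Xi_k^{(0)}=\Xi\cap W$. Then $F^\times\cap\Xi_k^{(0)}\subseteq K:=H\cap(F^\times)^k$, a group with $H^k\subseteq K\subseteq H$, so that $[K:H^k]<\infty$ as well. Each $x\in K$ may be written $x=y^k$ with $y\in\frak o_{F,S}^\times$, unique up to $\mu_k(F)$, and one checks directly that $x\in H^k$ precisely when $y\in\mu_k(F)H$. Thus the finite group $K/H^k$ records exactly whether the global $k$-th root of $x$ can be chosen inside $\Xi$; the whole difficulty is concentrated here.

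To finish I would cut $\Xi_k^{(0)}$ down so that no nontrivial coset of $H^k$ in $K$ survives. Choosing coset representatives $x_2,\dots,x_r$ of $K/H^k$, with $x_j=y_j^k$ and $y_j\notin\mu_k(F)H$, I would produce for each $j$ an auxiliary prime $\frak q_j\notin S$ and an open subgroup $O_j\subseteq F_{\frak q_j}^\times$ that contains the localization of $H^k$ but not $x_j$, and then set $\Xi_k=\Xi_k^{(0)}\cap\bigcap_{j}\{g\in\Bbb I_F:\ g_{\frak q_j}\in O_j\}$. Indeed, if some $x\in F^\times\cap\Xi_k$ lay in a coset $x_jH^k$, then at $\frak q_j$ both $x_{\frak q_j}$ and the localization of the $H^k$-part lie in $O_j$, forcing $(x_j)_{\frak q_j}\in O_j$, contrary to the choice; hence $F^\times\cap\Xi_k\subseteq H^k=(F^\times\cap\Xi)^k$, as desired.

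The main obstacle is the existence of the separating data $(\frak q_j,O_j)$. Since $x_j$ is already a global $k$-th power it is locally a $k$-th power everywhere, so one cannot take $O_j=(F_{\frak q_j}^\times)^k$; the local condition must detect the root $y_j$ rather than $x_j$. I would select $\frak q_j$ by the Chebotarev density theorem so that $\frak q_j\nmid k$ and the group of local $k$-th roots of unity in $F_{\frak q_j}$ equals the image of $\mu_k(F)$, so that the ambiguity of the local $k$-th root is exactly $\mu_k(F)$. Using that $\frak o_{F,S}^\times$ embeds into $\prod_{\frak q\notin S}\kappa(\frak q)^\times$, I would further arrange that the residue of $y_j$ falls outside the residue of $\mu_k(F)H$, and take $O_j$ to be the $k$-th powers of the preimage in $\frak o_{F_{\frak q_j}}^\times$ of a subgroup of $\kappa(\frak q_j)^\times$ containing $\overline{\mu_k(F)H}$ but not $\overline{y_j}$; a short check then gives $(H^k)_{\frak q_j}\subseteq O_j$ and $(x_j)_{\frak q_j}\notin O_j$. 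This local–global separation of the finite group $K/H^k$ — resting on the same Grunwald–Wang input for $k$-th powers already used in Lemma \ref{power}, now combined with Chebotarev to control the roots of unity — is the only delicate point, the rest being bookkeeping.
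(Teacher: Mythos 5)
Your preparatory steps are all correct and run parallel to the paper's own beginning: the reduction to a basic open subgroup $\prod_{\frak p\in S}U_\frak p\times\prod_{\frak p\notin S}\frak o_{F_\frak p}^\times$, the inclusion $H=F^\times\cap\Xi\subseteq\frak o_{F,S}^\times$, the finiteness of $K/H^k$ with $K=H\cap(F^\times)^k$, and the criterion $y^k\in H^k\Leftrightarrow y\in\mu_k(F)H$. The gap is precisely the step you flag as "the only delicate point": the existence of the separating pairs $(\frak q_j,O_j)$. This is not bookkeeping; it is the whole content of the corollary, and as stated it can fail outright. Your Chebotarev selection needs a prime with $\mu_k(F_{\frak q_j})=\mu_k(F)$, i.e.\ a Frobenius acting nontrivially on every $F(\mu_{q^{t_q+1}})/F$ with $q\mid k$; but these cyclotomic extensions need not be independent. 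Take $F=\Q(\mu_7)^+\Q(\mu_{11})^+\Q(\sqrt{-231})$ and $k=231=3\cdot7\cdot11$. Then $F(\mu_3)=F(\sqrt{-3})$, $F(\mu_7)=F(\sqrt{-7})$ and $F(\mu_{11})=F(\sqrt{-11})=F(\sqrt{21})$ are three distinct quadratic extensions, the third lying in the compositum of the first two, so every element of $\Gal(F(\mu_3,\mu_7)/F)\cong(\Z/2)^2$ fixes one of them pointwise; hence every prime of $F$ unramified in $F(\mu_{231})$ splits in one of the three, so its completion contains $\mu_3$, $\mu_7$ or $\mu_{11}$ (the finitely many ramified primes are checked directly: $\sqrt{-11}\in\Q_3$, $\sqrt{-3}\in\Q_7$, $\sqrt{-7}\in\Q_{11}$, $\sqrt{-7}\in\Q_2$). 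Thus \emph{no} prime $\frak q$ of $F$ satisfies $\mu_k(F_\frak q)=\mu_k(F)$ — a Grunwald--Wang special-case configuration — and your selection step is vacuous. Moreover, even when such primes exist, your second requirement, a prime at which $\overline{y_j}\notin\overline{\mu_k(F)H}$, is a nontrivial theorem about detecting membership in finitely generated subgroups of $F^\times$ by reduction maps; it does not follow from the injectivity of $\frak o_{F,S}^\times\to\prod_{\frak q\notin S}\kappa(\frak q)^\times$, which separates $y_j$ from each single element of $\mu_k(F)H$ at some prime, but not from the whole infinite group at one prime. Neither Lemma \ref{power} nor Chebotarev supplies this.

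The paper's proof never attempts such a separation. It applies Lemma \ref{power} with the inflated exponent $k^{r+1}$, where $r=\prod_{\frak p\in S_0}(t_\frak p+1)$ and $t_\frak p$ bounds the order of $q$-power roots of unity in $F_\frak p$ for $q\mid k$, and sets $\Xi_k=\prod_{\frak p\in S}\Xi_\frak p^{2h_Fk^{r+1}}\times\prod_{\frak p\notin S}\Xi_\frak p$. Then $x\in F^\times\cap\Xi_k$ gives $x=y^{k^{r+1}}$ with $y\in F^\times$, while locally $x=\xi_\frak p^{k^{r+1}}$ with $\xi_\frak p\in\Xi_\frak p$; the ambiguity $y/\xi_\frak p$ is a root of unity in $F_\frak p$ of order dividing $k^{r+1}$, and the maximality of $t_\frak p$ forces $(y/\xi_\frak p)^{k^r}=1$, so $y^{k^r}=\xi_\frak p^{k^r}\in\Xi_\frak p$ for $\frak p\in S$, and trivially $y^{k^r}\in\frak o_{F_\frak p}^\times=\Xi_\frak p$ elsewhere. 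Hence $y^{k^r}\in F^\times\cap\Xi$ and $x=(y^{k^r})^k\in(F^\times\cap\Xi)^k$. In short, the root-of-unity ambiguity you try to eliminate by local conditions at auxiliary primes is simply annihilated by the spare factor $k^r$; this exponent-inflation trick is exactly what makes the paper's argument elementary and immune to the cyclotomic degeneration and the unproven reduction-detection theorem on which your proposal rests.
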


\begin{proof}
Let $\Xi_\frak p$ be an open subgroup of $F_\frak p$ for each $\frak
p\in \Omega_F$ such that $\prod_{\frak p\in \Omega_F}\Xi_{\frak p}
\subseteq \Xi $ and $\Xi_\frak p=\frak o_{F_\frak p}^\times$ for
almost all $\frak p \in \Omega_F$. Let $S_0$ be a finite subset of
$\Omega_F$ such that $\Xi_\frak p=\frak o_{F_\frak p}^\times$ for
all $\frak p\not\in S_0$. For each $\frak p\in S_0$ and for a prime
divisor $q$ of $k$, one defines $t_\frak p(q)$ to be the maximal
positive integer $t$ such that the primitive $q^t$-th roots of unity
are in $F_\frak p$. Let
$$t_\frak p: = \max \{ t_\frak p(q): \ \text{prime divisor $q$ of $k$} \} \ \ \ \text{and} \ \ \
r: =\prod_{\frak p\in S_0}(t_\frak p+1) .$$ Applying Lemma
\ref{power} for $k^{r+1}$, one obtains a finite subset $T_0$ of
$\Omega_F$. Let $S=S_0\cup T_0$ and defines $$\Xi_k =\prod_{\frak
p\in S}\Xi_\frak p^{2h_Fk^{r+1}} \times \prod_{\frak p\not\in
S}\Xi_\frak p  .$$

For any $x\in (F^\times \cap \Xi_k)$, there is $y\in F^\times$ such
that $x=y^{k^{r+1}}$ by Lemma \ref{power}. Moreover, there is
$\xi_\frak p\in \Xi_\frak p$ such that $$x=y^{k^{r+1}}=\xi_\frak
p^{k^{r+1}}$$ for all $\frak p\in S$.

If $\frak p\in S_0$, one has that $$y^{k^r}=\xi_\frak p^{k^r} \in
\Xi_\frak p
$$ by the maximality of $t_\frak p$.

If $\frak p \not \in S_0$, one has $$y^{k^r}\in \frak o_{F_\frak
p}^\times =\Xi_\frak p  . $$

Therefore $y^{k^r}\in (F^\times \cap \Xi)$ and the proof is
complete.
\end{proof}

Now we can show the existence of the admissible subgroups for $\bf
X$.

\begin{thm}\label{multiexsitence} The $\bf X$-admissible subgroups of $\prod_{i=1}^n\Bbb I_{E_i}$
always exist. \end{thm}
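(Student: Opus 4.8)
The plan is to imitate the proof of Theorem~\ref{exsitence}, building an explicit $\bf X$-admissible subgroup $\Xi$ place by place, with Lemma~\ref{openmodel} in place of Lemma~\ref{open} and Lemma~\ref{power} applied verbatim. Write $N=\prod_{i=1}^n N_{E_i/F}$ for the total norm and set $m=\sum_{i=1}^n[E_i:F]$, so that a diagonally embedded $\alpha\in F_\frak p^\times$ satisfies $N((\alpha,\dots,\alpha))=\alpha^m$. First I would fix a finite set $S_0\subset\Omega_F\setminus\infty_F$ with
$$\lambda_{(E_1,\dots,E_n)}[Stab({\bf X}(\frak o_{F_\frak p}))]=\ker(N:{\textstyle\prod_{i=1}^n}\frak o_{{E_i}_{\frak p}}^\times\to\frak o_{F_\frak p}^\times)\qquad(\frak p\notin S_0),$$
which is possible because $Stab({\bf X}(\frak o_{F_\frak p}))\supseteq G(\frak o_{F_\frak p})$ for almost all $\frak p$ by Lemma~\ref{stab}, and $\lambda_{(E_1,\dots,E_n)}$ carries $G(\frak o_{F_\frak p})$ onto the norm-one units at almost all $\frak p$.

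Next, for each $\frak p\in S_0$ and each prime $q\mid m$, let $t_\frak p(q)$ be the largest $t$ with $\zeta_{q^t}\in F_\frak p$, and choose a positive integer $c$ with $q^{t_\frak p(q)}\mid c$ for all such $q$ and $\frak p$. Applying Lemma~\ref{power} with $k=cm$ gives a finite set $T_0$; I then set $S=S_0\cup T_0$ and
$$\Xi=\Big(\prod_{\frak p\in S}\lambda_{(E_1,\dots,E_n)}[Stab({\bf X}(\frak o_{F_\frak p}))]\cdot\{(\alpha,\dots,\alpha):\alpha\in(F_\frak p^\times)^{2h_Fc}\}\Big)\times\prod_{\frak p\notin S}{\textstyle\prod_{i=1}^n}\frak o_{{E_i}_{\frak p}}^\times.$$
By Lemma~\ref{openmodel}, $\Xi$ is an open subgroup of $\prod_{i=1}^n\Bbb I_{E_i}$, and $\lambda_{(E_1,\dots,E_n)}[Stab_\A({\bf X})]\subseteq\Xi$ by the choice of $S_0$.

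For injectivity I would take $\sigma\in G(\Bbb A_F)$ with $\lambda_{(E_1,\dots,E_n)}(\sigma)=a\cdot i$, where $a=(a_1,\dots,a_n)\in\prod_{i=1}^n E_i^\times$ and $i=(i_\frak p)\in\Xi$, and write $i_\frak p=s_\frak p\cdot(\alpha_\frak p,\dots,\alpha_\frak p)$ with $s_\frak p\in\lambda_{(E_1,\dots,E_n)}[Stab({\bf X}(\frak o_{F_\frak p}))]$ and $\alpha_\frak p\in(F_\frak p^\times)^{2h_Fc}$ for $\frak p\in S$. Taking $N$ of the identity $N(a\cdot i_\frak p)=1$ gives $N(a)=\alpha_\frak p^{-m}\in(F_\frak p^\times)^{2h_Fcm}$ for $\frak p\in S$ and $N(a)\in\frak o_{F_\frak p}^\times$ for $\frak p\notin S$, so Lemma~\ref{power} produces $u\in F^\times$ with $N(a)=u^{cm}$, and $u$ is automatically a unit outside $S$. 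Replacing $a$ by $a\cdot(u^{-c},\dots,u^{-c})$ and $i$ by $(u^c,\dots,u^c)\cdot i$ keeps the product equal to $\lambda_{(E_1,\dots,E_n)}(\sigma)$ and forces $N$ of the first factor to be $1$, so it lies in $\lambda_{(E_1,\dots,E_n)}[G(F)]$. Since $u$ is a unit outside $S$ and $(u^c\alpha_\frak p)^m=u^{cm}\alpha_\frak p^m=1$, at every $\frak p\notin S_0$ the modified local component has unit entries and total norm $1$, hence lies in $\ker(N:{\textstyle\prod_i}\frak o^\times_{{E_i}_{\frak p}}\to\frak o^\times_{F_\frak p})=\lambda_{(E_1,\dots,E_n)}[Stab({\bf X}(\frak o_{F_\frak p}))]$, exactly as in Theorem~\ref{exsitence}.

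The main obstacle is at the places $\frak p\in S_0$, where $\lambda_{(E_1,\dots,E_n)}[Stab({\bf X}(\frak o_{F_\frak p}))]$ need not contain the diagonal element $(w,\dots,w)$ with $w:=u^c\alpha_\frak p$, so I must prove $w=1$ rather than merely $w^m=1$; this is the step generalizing the $\pm1$ argument of Theorem~\ref{exsitence} from $m=2$ to arbitrary $m=\sum_i[E_i:F]$. The exponent $2h_Fc$ lets me write $w=z^c$ for some $z\in F_\frak p^\times$, and since $w^m=1$, $z$ is a root of unity, say of order $e$, with $d:=\mathrm{ord}(w)=e/\gcd(e,c)$ dividing $m$. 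If $w\ne1$, then $d>1$ has a prime divisor $q\mid m$, and comparing $q$-adic valuations in $d=e/\gcd(e,c)$ gives $v_q(e)=v_q(d)+v_q(c)\geq 1+t_\frak p(q)$ (using $v_q(d)\geq1$ and $q^{t_\frak p(q)}\mid c$); hence $\zeta_{q^{\,t_\frak p(q)+1}}\in F_\frak p$, contradicting the maximality defining $t_\frak p(q)$. Therefore $w=1$, the modified idele equals $s_\frak p$ at each $\frak p\in S_0$, and the injectivity of $\lambda_{(E_1,\dots,E_n)}$ then yields $\sigma\in G(F)Stab_\A({\bf X})$, completing the proof of admissibility.
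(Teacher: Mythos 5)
Your proposal is correct and follows essentially the same route as the paper: the paper simply makes the specific choice $c=l^{r}$ with $r=\prod_{\frak p\in S_0}(t_\frak p+1)$, applies Lemma \ref{power} with $k=l^{r+1}$, and kills the leftover diagonal root of unity at $\frak p\in S_0$ by exactly the maximality-of-$t_\frak p(q)$ argument you spell out in more detail. The one point to tighten is your justification of $S_0$: the displayed equality (not merely the inclusion $\supseteq$) of $\lambda_{(E_1,\dots,E_n)}[Stab({\bf X}(\frak o_{F_\frak p}))]$ with the norm-one units requires spreading out the trivial torsor to a model over $\frak o_{S_0}$, as in the paper's first step, so that $Stab({\bf X}(\frak o_{F_\frak p}))$ actually equals ${\bf G}(\frak o_{F_\frak p})$ for $\frak p\notin S_0$.
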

\begin{proof}
Since $X_F$ is a trivial torsor  over $F$ under $G$, there is a
finite subset $S_0 \subset (\Omega_F\setminus \infty_F)$ such that

1) the map $\varphi$ can be extended to
$$  \varPhi : \ {R_{\frak o_{E_1,S_0}/\frak o_{F,S_0}}(\G_m)\times \cdots\times
R_{\frak o_{E_n,S_0}/\frak o_{F,S_0}}(\G_m)\longrightarrow \G_m}$$
over $\frak o_{F,S_0}$, where $\frak o_{F,S_0}$, and $\frak
o_{E_1,S_0}, \cdots \frak o_{E_n, S_0}$ are the $S_0$-integers of
$F$ and $E_1, \cdots, E_n$ respectively. Moreover, $ker(\varPhi)=\bf
G$ is a model of $G$ over $\frak o_{F, S_0}$.

2)  ${\bf X}_{S_0}$ is a trivial torsor over $\frak o_{S_0}$ under
$\bf G$, where ${\bf X}_{S_0}={\bf X}\times_{\frak o_F} \frak
 o_{F, S_0}$.

This implies that
$$\lambda_{(E_1,\dots,E_n)} [Stab({\bf X}(\frak o_{F_\frak p}))]=
ker (\prod_{i=1}^n N_{E_i/F}: \prod_{i=1}^n\frak o_{{E_i}_\frak
p}^\times {\longrightarrow} \ \frak o_{F_\frak p}^\times)$$ for
every $\frak p\not \in S_0$.

Let $l=\sum_{i=1}^n l_i$ with $l_i=[E_i:F]$ for all $1\leq i\leq n$.
For each $\frak p\in S_0$ and prime divisor $q$ of $l$, one defines
$t_\frak p(q)$ to be the maximal positive integer $t$ such that the
primitive $q^t$-th roots of unity are in $F_\frak p$. Let
$$t_\frak p: = \max \{ t_\frak p(q): \ \text{$q$ divides $l$} \} \ \ \ \text{and} \ \ \
r: =\prod_{\frak p\in S_0}(t_\frak p+1) .$$

Let $T_0$ be a finite subset of $\Omega_F$ outside $\infty_F$  such
that Lemma \ref{power} holds for $k=l^{r+1}$. Put $S=S_0\cup T_0$
and
$$\Xi= [\prod_{\frak p\in S}\lambda_{(E_1,\dots,E_n)}(Stab({\bf X}(\frak o_{F_\frak p})))\cdot H_\frak
p] \times (\prod_{i=1}^n \prod_{\frak p\not \in S} \frak
o_{{E_i}_\frak p}^\times)
$$ where $$H_\frak p=\{(a,\dots, a): a\in (F_\frak p^\times)^{2l^rh_F}
\}$$ and $h_F$ is the class number of $F$. By Lemma \ref{openmodel},
one has that $\Xi$ is an open subgroup of $\prod_{i=1}^n \Bbb
I_{E_i}$ and
$$\lambda_{(E_1,\dots,E_n)}[Stab_\A({\bf X})]\subseteq \Xi .$$

Suppose $\sigma \in G(\A_F)$ with $\lambda_{(E_1,\dots,E_n)}(\sigma)
=ai$ where
$$a=(a_i)_{i=1}^n \in \prod_{i=1}^n E_i^\times \ \ \ \text{and} \ \ \ i\in \Xi . $$ Then
$$ \prod_{i=1}^n N_{E_i/F}(a_i)\in [\prod_{\frak p\in S}(F_\frak p^\times)^{2l^{r+1}h_F} \times
\prod_{\frak p\not\in S}\frak o_{F_\frak p}^\times ].$$  By Lemma
\ref{power}, there is $u\in F^\times$ such that $\prod_{i=1}^n
N_{E_i/F}(a_i)=u^{l^{r+1}}$. This implies that
$$(a_1 u^{-l^r}, \dots , a_n u^{-l^r}) \in \lambda_{(E_1,\dots,E_n)}(G(F)) . $$ Write $i=( i_\frak p )_{\frak
p\in \Omega_F}$ and $i_\frak p=s_\frak p \cdot (n_\frak p, \dots,
n_{\frak p})$ with
$$s_\frak p\in \lambda_{(E_1,\dots,E_n)}(Stab({\bf X}(\frak o_{F_\frak p})) \ \ \ \text{and} \ \ \
n_\frak p\in (F_\frak p^\times)^{2l^rh_F}$$ for $\frak p\in S$.
Therefore $u^{l^{r+1}}n_\frak p^l=1$ for $\frak p\in S$.

If $\frak p\in S_0$, one has that $u^{l^r}n_\frak p=1$ by the
maximality of $t_\frak p$.

If $\frak p \in S\setminus S_0$, one has $$u^{l^r}(n_\frak p, \dots,
n_\frak p)  \in ker (\prod_{i=1}^n N_{E_i/F}: \prod_{i=1}^n\frak
o_{{E_i}_\frak p}^\times {\longrightarrow} \ \frak o_{F_\frak
p}^\times) .$$

Therefore one concludes that
$$u^{l^r}i\in \lambda_{(E_1,\dots,E_n)}[Stab_\A({\bf X})] \ \ \ \text{and} \ \ \ \sigma
\in G(F)Stab_\A({\bf X}) $$  and the proof is complete.
\end{proof}

\bigskip

\section{Finite Brauer-Manin Obstruction}

In this section we interpret the $\bf X$-admissible subgroup in
terms of Brauer-Manin obstruction. We keep the same notation as
before. By the definition of $G$, one has the surjective map
$$\widehat{R_{E_1/F}(\Bbb G_m)}\times \cdots \times \widehat{R_{E_n/F}(\Bbb
G_m)}\longrightarrow \hat{G} $$ which induces
\begin{equation}  \phi : \ \ \
\label{prod} \prod_{i=1}^n H^2(F, \widehat{R_{E_i/F}(\Bbb
G_m)})=\prod_{i=1}^n H^2(E_i, \Bbb Z)\longrightarrow H^2(F,\hat{G})
\end{equation}  by Shapiro's lemma (see (1.6.3) Proposition in
\cite{NSW}).

Let $\Xi$ be an $\bf X$-admissible subgroup of $\prod_{i=1}^n \Bbb
I_{E_i}$. There is an open subgroup $\Xi_i$ of $\Bbb I_{E_i}$ for
each $1\leq i\leq n$ such that $\prod_{i=1}^n \Xi_i \subseteq \Xi$.
By the class field theory, there is a finite abelian extension
$K_{\Xi_i}/E_i$ satisfying (\ref{artinmulti}) for $1\leq i\leq n$.

By the short exact sequence
$$0\longrightarrow \Bbb Z\longrightarrow \Bbb Q \longrightarrow \Bbb
Q/\Bbb Z \longrightarrow 0 $$ with the trivial action of
$Gal(K_{\Xi_i}/E_i)$, one obtains an isomorphism of finite groups
\begin{equation} \label{connection} \delta_i: \ \
Hom(Gal(K_{\Xi_i}/E_i), \Bbb Q/\Bbb Z)   \cong
H^2(Gal(K_{\Xi_i}/E_i), \Bbb Z).
\end{equation}  Let $b_i(\Xi_i)$ be the image of the
inflation \begin{equation} \label{inf} H^2(Gal(K_{\Xi_i}/E_i), \Bbb
Z)\longrightarrow H^2(E_i, \Bbb Z)  \end{equation} for $1\leq i\leq
n$. Since $X_F\cong G$ by the fixed rational point $P$, one has $$
\rho: \ \ H^2(F, \hat{G}) \cong Br_1(G) \cong Br_1(X_F)
 $$
by Theorem 1 of \S 4.3 in \cite{Vo98}. Let $b(\Xi)$ be the subgroup
of $Br_1(X_F)$ generated by the image of $\{b_i(\Xi_i)\}_{i=1}^n $
under the map $\rho \circ \phi$. It is clear that $b(\Xi)$ is
finite.

\begin{prop}\label{brauer} With the above notation,
$$\bold X(\frak o_F) \neq \emptyset \ \ \ \text{ if and only if} \ \
\ \ [\prod_{\frak p\in \Omega_F} \bold X(\frak o_{F_\frak
p})]^{b(\Xi)}\neq \emptyset.$$
\end{prop}
\begin{proof} One only needs to show the sufficiency.
For any $$\chi \in Hom(Gal(K_{\Xi_i}/E_i), \Bbb Q/\Bbb Z) \ \
\text{and} \ \ (x_\frak p)_{\frak p\in \Omega_F}\in [\prod_{\frak
p\in \Omega_F} \bold X(\frak o_{F_\frak p})]^{b(\Xi)} , $$ there is
$(\sigma_\frak p)_{\frak p\in \Omega_F}\in G(\Bbb A_F)$ such that
$$f_{(E_1,\dots,E_n)}((x_\frak p)_{\frak p\in \Omega_F})=\lambda_{(E_1,\dots, E_n)}((\sigma_\frak p)_{\frak p\in \Omega_F}).$$
Since
$$ H^1(K_{\Xi_i}, \Bbb Z)\cong Hom_{cont}(Gal(\bar E/K_\Xi), \Bbb
Z)=0 , $$ the inflation map in (\ref{inf}) is injective. One can
view $\delta_i(\chi)\in b_i(\Xi_i)$ and obtains
$$\begin{aligned}
 & 0=\sum_{\frak p\in \Omega_F}inv_\frak p(\rho \circ
\phi(\delta_i(\chi)) (x_\frak p))= \sum_{\frak p\in
\Omega_F}inv_\frak p(\phi(\delta_i(\chi)) (\sigma_\frak p)) \\
= & \sum_{\frak p\in \Omega_F}inv_\frak p (\delta_i(\chi)\cup
(\lambda_{(E_1,\dots,E_n)}(\sigma_\frak
p)))=\chi(\psi_{K_{\Xi_i}/E_i}
(\lambda_{(E_1,\dots,E_n)}((\sigma_\frak p)_{\frak p\in \Omega_F})))
\end{aligned}
$$
by the functoriality of the Brauer-Manin pairing (see (5.3) in
\cite{Sko}), the compatibility of Bauer-Manin pairing with the cup
product (see  Section 6 in \cite{HS08}) and (8.1.11) Proposition in
\cite{NSW}. This implies
$$\psi_{K_{\Xi_i}/E_i}(f_{E_i}((x_\frak p)_{\frak p\in \Omega_F})=0 $$
for all $1\leq i\leq n$. The result follows from Corollary
\ref{efmulti}.
\end{proof}

It should be pointed out that the finite subgroup $b(\Xi)$ of
$Br_1(X_F)$ depends on the integral model $\bf X$. In general, there
does not exist a universal finite subgroup $\frak s$ of $
Br_1(X_F)$, which can be used to test the existence of the integral
points for all integral model $\bf X$ of $X_F$.

\begin{exa} Let $F=\Q$ and $E=\Q(\sqrt{-1})$ and
$$G=X_\Q=R^1_{E/F}(\G_m)=Spec(\Q[x,y]/(x^2+y^2-1)). $$
For any finite subgroup $\frak s$ of $Br_1(X_\Q)$, there exists a
separated $\Bbb Z$-scheme $\bf X$ of finite type with the generic
fiber $X_\Q$ such that $$(\prod_{p\leq \infty}{\bf X}(\Bbb
Z_p))^{\frak s} \neq \emptyset \ \ \ \ \text{but} \ \ \ \ {\bf
X}(\Bbb Z)=\emptyset .$$
\end{exa}

\begin{proof} Since $(x+\sqrt{-1}y)(x-\sqrt{-1}y)=x^2+y^2$, one has
$$ \bar \Q[X_\Q]^\times =\bar \Q^\times \times f_E^\Bbb Z
\ \ \ \text{with}  \ \ \ f_E=x+y\sqrt{-1}. $$ Define
$$Ind_{Gal(\bar \Q/E)}^{Gal(\bar \Q/\Q)} (f_E^\Z) \longrightarrow
f_E^\Z, \ \ \ \xi\mapsto \sum_{\sigma\in Gal(E/\Q)}\sigma
\xi(\sigma^{-1}) .$$  One has the exact sequence of $Gal(\bar
\Q/\Q)$-module
$$1\longrightarrow \Z \longrightarrow Ind_{Gal(\bar \Q/E)}^{Gal(\bar
\Q/\Q)} (f_E^\Z) \longrightarrow f_E^\Z \longrightarrow 1 .$$ The
Galois cohomology gives
$$H^2(\Q,Ind_{Gal(\bar \Q/E)}^{Gal(\bar
\Q/\Q)} (f_E^\Z))\longrightarrow H^2(\Q,f_E^\Z)\longrightarrow
H^3(\Q,\Z).$$ By Corollary 4.17 in \cite{Milne86}, one has
$H^3(\Q,\Z)=0$. By Shapiro's lemma (see (1.6.4) Proposition
\cite{NSW}), the corestriction map  \begin{equation} \label{cor}
cor_{E/\Q}: \ \ \ H^2(E, f_E^\Z) \longrightarrow H^2(\Q,
f_E^\Z)\end{equation} is surjective. Since $Gal(\bar \Q/E)$ acts on
$f_E^\Z$ trivially, one gets
\begin{equation} \label{iso} \delta: \ \ \ Hom(Gal(\bar \Q/E), \Q/\Z)
\cong H^2(E, f_E^\Z)
 \end{equation} by using
the exact sequence with the trivial action $$0\longrightarrow \Z
\longrightarrow \Q \longrightarrow \Q/\Z \longrightarrow 0. $$

By Theorem 1 of \S 4.3 in \cite{Vo98}, there is a finite subgroup
$$S\subset Hom(Gal(\bar \Q/E), \Q/\Z) \ \ \ \text{ such that} \ \ \
cor_{E/\Q}\circ \delta (S)= \frak s . $$ By the class field theory,
there are a finite abelian extension $K_S/E$ such that
$$S=\{ \chi \in Hom(Gal(\bar \Q/E), \Q/\Z): \ \ \ \chi|_{Gal(\bar
\Q/K_S)}=1 \}$$ and an open subgroup $U$ of $\Bbb I_E$ with
$$  \psi_S: \ \ \ \Bbb I_E/E^\times U \cong Gal(K_S/E) $$ via the Artin map. Let
$l$ be an odd prime such that the invertible elements of the
$l$-adic completion of $\frak o_E$ are contained in $U$. Define $\bf
X$ by the equations
$$\begin{cases}x^2+y^2=1\\
x-l+l^2z=0.
\end{cases}$$
By Hensel's lemma, there is $y_0\in \Z_l$ such that
$(1-l)^2+2y_0+l^2 y_0^2=0$. Choose the local solution $(s_p)_p \in
\prod_{p\leq \infty} {\bf X}(\Z_p)  $ by
$$s_p=(x_p,y_p,z_p)=\begin{cases} (1,0,l^{-2}(l-1)) \ \ \ & \text{if $p\neq l$}  \\
(l(1-l), 1+l^2y_0, 1) \ \ \ & \text{if $p=l$.} \end{cases} $$ Since
$f_E(s_p)\in \prod_{\frak p |p}E_\frak p$ and more precisely
$$f_E((x_p,y_p,z_p))=\begin{cases} (x_p+y_p\sqrt{-1}, x_p-y_p\sqrt{-1}) \ \ \ & \text{if $p$
splits in $E/\Q$} \\
x_p+y_p\sqrt{-1} \ \ \ & \text{otherwise} \end{cases} $$ for all
primes $p$ including $\infty$, one has $f_E((s_p)_p)\in U$. For any
$\beta\in \frak s$, there is $\chi \in S$ such that $\beta
=cor_{E/\Q}\circ \delta (\chi)$.  By (8.1.4) Proposition and (7.1.4)
Corollary in \cite{NSW}, one has
$$\sum_{p\leq \infty} inv_p(\beta (s_p))=\sum_{\frak p\in \Omega_E}inv_\frak p(\delta(\chi)(s_p)).$$
By (8.1.11) Proposition in \cite{NSW}, one concludes that
$$\sum_{\frak p\in \Omega_E}inv_\frak p(\delta(\chi)(s_p))=\chi(\psi_S(f_{E}((s_p)_{p}))=0. $$
This implies that $$(s_p)_{p\leq \infty} \in (\prod_{p\leq \infty}
{\bf X}(\Z_p))^\frak s .$$ However, it is clear that ${\bf
X}(\Z)=\emptyset$.
\end{proof}
\bigskip

\section{Binary quadratic Diophantine equations} \label{quadratic}

In this section, we apply the results in the previous sections to
$\frak o_F$-scheme $\bold X$ defined by the following irreducible
polynomial
\begin{equation} \label{equ}
ax^2+bxy+cy^2+ex+fy+g=0 \end{equation}  with $a,b,c,e,f,g\in \frak
o_F$ and $-d=b^2-4ac \in F^\times$.

Let $V=Fv_1+Fv_2$ be a 2-dimensional quadratic space over $F$ with a
basis $\{v_1, v_2\}$ and the associated symmetric bilinear map $B:
V\times V\rightarrow F$ satisfying
$$  B(v_1,v_1)=a,  \ B(v_2,v_2)=c \ \
\text{and} \ \ B(v_1,v_2)=\frac{1}{2} b . $$ Let the group $G$
represent $SO(V\otimes_F A)$ for any $F$-algebra $A$. Then $X_F$ is
a $G$-torsor over $F$ and $G(F)=SO(V)$. Let $SO_\A(V)$ and
$GL_\A(V)$ be the adelic groups of $SO(V)$ and $GL(V)$ respectively.

An $\frak o_F$-lattice $L$ in $V$ is defined as a finitely generated
$\frak o_F$-module satisfying $FL=V$ and $L_\frak p$ is the local
completion of $L$ inside the local completion $V_\frak p$ of $V$ at
$\frak p$ for $\frak p< \infty_F$ and $L_\frak p=V_\frak p$ for
$\frak p\in \infty_F$.

\begin{lem}\label{act} Let $L$ be an $\frak o_F$-lattice in $V$ and $u_0\in V$.
For any $(\sigma_\frak p)_{\frak p\in \Omega_F}\in GL_\A(V)$, there
is a unique $\frak o_F$-lattice $L'$ in $V$ and $u'_0\in V$ unique
up to elements in $L'$ such that
$$L'_\frak p+u'_0=\sigma_\frak p(L_\frak p+u_0)$$ inside $V_\frak p$ for all $\frak p<\infty_F$.

\end{lem}

\begin{proof} By 81:14 of \cite{O73}, there is an $\frak o_F$-lattice $L'$ of
$V$ such that  $L'_\frak p=\sigma_\frak p L_\frak p$ for all $\frak
p< \infty_F$. By the strong approximation theorem for $\Bbb G_a$,
there is $u_0'\in V$ such that
$$\sigma_\frak p u_0 -u'_0 \in L'_\frak p $$ for all $\frak p<\infty_F$.
Therefore $$L'_\frak p+u'_0=L'_\frak p + \sigma_\frak p u_0 =
\sigma_\frak p L_\frak p + \sigma_\frak p u_0 =\sigma_\frak
p(L_\frak p+u_0)$$ for all $\frak p< \infty_F$ as required.

Suppose there is an $\frak o_F$-lattice $M$ in $V$ and $x_0\in V$
such that $$x_0+M_\frak p=u'_0+L'_\frak p $$ for all $\frak p <
\infty_F$. This implies that $M_\frak p=L'_\frak p$ and $x_0-u'_0
\in L'_\frak p$ for all $\frak p <\infty_F$. Therefore $M=L'$ and
$x_0-u'_0\in L'$ and the uniqueness follows.
\end{proof}

By Lemma \ref{act}, one can define the action of $GL_\A(V)$ on
$L+u_0$ and set
$$SO_\A(L+ u_0)=\{ \sigma \in SO_\A(V): \ \sigma
(L+ u_0)=(L+ u_0) \}. $$ By (\ref{loc}), $X_F(F)\neq \emptyset $ by
the Hasse principle. To fix a rational point $P$ in $X_F(F)$ is
equivalent to fix a non-zero vector $x_0\in V$ such that
$$B(x_0,x_0)=B(u_0,u_0)-g .$$

In particular, we can choose
$$L=\frak o_F v_1+\frak o_F v_2 \ \ \ \text{and} \ \ \
u_0=d^{-1}(2ce-bf)v_1+d^{-1}(2af-be)v_2 \in V. $$ If $\sigma \in
SO(V)$ with $\sigma x_0 \in (L+u_0)$, there are $\alpha, \beta \in
\frak o_F$ such that $\sigma x_0=\alpha v_1+\beta v_2+ u_0$. One can
verify that $(\alpha, \beta)$ is a solution of the equation
(\ref{equ}). This gives the map
$$ \{ \sigma \in SO(V): \  \sigma x_0\in (L+u_0)
\} \longrightarrow \bold X(\frak o_F) . $$ In fact, this map is
bijective by the Witt theorem (42:16 and 42:17 of \cite{O73}). One
can also extend this bijection to
$$\bold X(\frak o_{F_\frak p})\cong \{ \sigma \in SO(V_\frak p): \
x_0\in \sigma (L_\frak p+u_0) \} $$ for all $\frak p<\infty_F$.
Since $u_0\in L_\frak p$ and $\frak o_{F_\frak p}x_0$ splits
$L_\frak p$ for almost all $\frak p$, one has
$$SO_\A(L+u_0) \subseteq Stab_{\Bbb A}({\bf X})$$ with finite index.
In practice, we replace $Stab_{\Bbb A}({\bf X})$ by $SO_\A(L+u_0)$
which is much easier to be computed.

\bigskip

\subsection*{Split Cases} \label{split} $-d=b^2-4ac \in (F^\times)^2$.

Fix the isotropic basis of $V$ $$\{ (b+\varpi)v_1-2a v_2, \
(b-\varpi)v_1-2a v_2 \}$$ where $\varpi\in F^\times$ satisfying
$\varpi^2=b^2-4ac$. For any $F$-algebra $A$, there is a group
isomorphism $SO(V\otimes_FA)\cong A^\times$ by sending $(\sigma
\mapsto \varrho)$ such that
$$\sigma((b+\varpi)v_1-2a v_2)=\varrho[(b+\varpi)v_1-2a v_2]. $$  The map  gives
 $G\cong \Bbb G_m$. Such a case is not included in \S 1.
However this case is much simpler. Indeed one can factorize the
equation (\ref{equ}) into the following
$$\begin{aligned} & a[x+\frac{b+\varpi}{2a}y+\frac{1}{2}(\frac{e}{a}-\frac{2f}{\varpi}+\frac{eb}{a\varpi})]
[x+\frac{b-\varpi}{2a}y+\frac{1}{2}(\frac{e}{a}+\frac{2f}{\varpi}-\frac{eb}{a\varpi})]\\
=& (bef-ce^2-af^2)(b^2-4ac)^{-1}-g \end{aligned} $$ Define
$$f_F= 2ax+(b+\varpi)y+(e-\frac{2af}{\varpi}+\frac{eb}{\varpi}) \in
F[X_F]^\times $$ which induces the injection
$$f_F: \ \ \prod_{\frak p\in \Omega_F} \bold X(\frak o_{F_\frak p})
 \longrightarrow \Bbb I_F. $$

With the above identification, one has
$$\bold X(\frak o_F) \neq \emptyset \ \Leftrightarrow \
f_F[\prod_{\frak p\in \Omega_F} \bold X(\frak o_{F_\frak p})]\cap
[F^\times SO_\A(L+ u_0)] \neq \emptyset$$ by the same argument as
those in Proposition \ref{mainmodel}. By the class field theory and
Theorem 8.1 in \cite{PR94}, there is a finite abelian extension
$K_{L+u_0}/F$ such that the Artin map gives the isomorphism $$
\psi_{K_{L+u_0}/F}: \ \ \ \Bbb I_F/F^\times SO_\A(L+ u_0) \cong
Gal(K_{L+u_0}/F)
$$
The above criterion can be stated as $\bold X(\frak o_F)\neq
\emptyset$ if and only if there is
$$x_A\in\prod_{\frak p\in \Omega_F} \bold X(\frak o_{F_\frak p}) \ \ \ \text{such that} \ \ \
\psi_{K_{L+u_0}/F}(f_F(x_A))=1 $$ in $Gal(K_{L+u_0}/F)$. This
criterion does not need any knowledge of the fixed rational point
$P$ in $X_F(F)$.

The class field theory only guarantees the existence of such abelian
extension of $K_{L+u_0}/F$. In order to obtain the explicit
condition for $\bold X(\frak o_F)\neq \emptyset$, one needs further
explicit description of $K_{L+u_0}/F$. There are no general theory
for such explicit construction (Hilbert's 12-th problem) except the
cyclotomic field theory over $\Bbb Q$ and the complex multiplication
theory of elliptic curves over imaginary quadratic fields (see
\cite{Sh71}).

One can determine a finite set $S$ which contains all ramification
primes in $K_{L+u_0}/F$. Such a set $S$ can be chosen as all primes
containing $\infty_F$, all dyadic primes of $F$, all finite primes
such that $x_0$ or $u_0$ is not in $L_\frak p$, all finite primes
such that $B(x_0,x_0)$ or $B(u_0,u_0)$ is not in $\frak o_{F_\frak
p}^\times$ and all primes such that $L_\frak p$ is not unimodular.

\bigskip

\subsection*{Non-split Cases}\label{nsplit} $-d=b^2-4ac \not\in (F^\times)^2$.

Fix the orthogonal basis $\{ v_1, bv_1-2a v_2\}$ of $V$. For any
$F$-algebra $A$, there is a group isomorphism
$$SO(V\otimes_FA)\cong
ker [(A\otimes_F E)^\times  \xrightarrow{N_{E/F}} A^\times ]; \ \
\sigma \mapsto \alpha+\beta\sqrt{-d} $$ such that $\sigma v_1=\alpha
v_1+\beta (bv_1-2a v_2)$, where $E=F(\sqrt{-d})$. This gives $G\cong
R_{E/F}^1(\Bbb G_m)$. By this identification, one can obtain an open
subgroup $\Xi$ of $\Bbb I_E$ such that the map induced by inclusion
$$SO_\A(V)/SO(V)SO_\A(L+u_0) \longrightarrow {\Bbb I}_E/E^\times \Xi  $$
is injective by applying Theorem \ref{multiexsitence}.

Let$$f_E=
2ax+(b+\sqrt{-d})y+(e-\frac{2af}{\sqrt{-d}}+\frac{eb}{\sqrt{-d}})
\in E[X_F\times_F E]^\times $$ which induces the injection
$$f_E: \ \ \prod_{\frak p\in \Omega_F} \bold X(\frak o_{F_\frak p})
 \longrightarrow \Bbb I_E. $$

By the same argument as those in Proposition \ref{mainmodel}, one
has
$$\bold X(\frak o_F) \neq \emptyset  \ \Leftrightarrow \
f_E[\prod_{\frak p\in \Omega_F} \bold X(\frak o_{F_\frak p})]\cap
(E^\times \Xi) \neq \emptyset.$$ Let $K_{\Xi}/E$ be a finite abelian
extension such that $$\psi_{K_{\Xi}/E}: \ \ \ \Bbb I_E/E^\times \Xi
\cong Gal(K_{\Xi}/E) $$ by the Artin map. The above criterion can be
stated as $\bold X(\frak o_F)\neq \emptyset$ if and only if there is
\begin{equation} \label{criterion} x_A\in\prod_{\frak p\in \Omega_F}
\bold X(\frak o_{F_\frak p}) \ \ \ \text{such that} \ \ \
\psi_{K_\Xi/E}(f_E(x_A))=1 \end{equation} in $Gal(K_\Xi/E)$.

\begin{rem} \label{remark} The necessity of the above criterion
does not need that $\Xi$ is admissible. Therefore $\bold X(\frak
o_F)=\emptyset$ if there is an abelian extension $K/E$ such that
$$\psi_{K/E}(f_E(x_A)) \neq 1 \ \ \ \text{for any} \ \ \
x_A\in\prod_{\frak p\in \Omega_F} \bold X(\frak o_{F_\frak p})$$
where $\psi_{K/E}$ is the Artin map.
\end{rem}

For determining a finite set $S$ which contains all ramification
primes in $K_{\Xi}/F$, one needs to add $T_0$ in Lemma \ref{power}
as the part of $S$ besides the above primes of $F$ in split case.

\bigskip

\section{Examples with imaginary quadratic splitting fields}

In this section, we will provide some explicit examples with
imaginary quadratic splitting fields over $\Bbb Q$ to explain how to
apply the result in \S \ref{quadratic} to obtain the explicit
conditions for $\bold X(\Bbb Z)\neq \emptyset$. The explicit
construction of the abelian extensions of the splitting fields with
the required property is crucial. For the imaginary quadratic field
case, such abelian extensions can be given by the complex
multiplication of elliptic curves in principle (see Theorem 5.5 in
\cite{Sh71}). We will keep the same notation as that in \S
\ref{quadratic}.

The method described in \S \ref{quadratic} is also the natural
extension of Gauss' idea for determining the integral representation
of the positive integers by a positive definite binary quadratic
form. We explain this point by using one of the typical example
$x^2+dy^2$ with a positive integer $d$, which is studied in detail
in \cite{Co89}. Let $L=\Bbb Z+ \Bbb Z\sqrt{-d}$ be an order in
$E=\Bbb Q(\sqrt{-d})$.

\begin{prop} \label{special} Let $\bold X$ be the scheme over $\Bbb Z$ defined by
$x^2+dy^2=n$ for a positive integer $n$, and $K_L$ be the ring class
field corresponding to the order $L$. Then $\bold X(\Bbb Z)\neq
\emptyset$ if and only if there is $$(x_p,y_p)_{p\leq \infty}\in
\prod_{p\leq \infty} \bold X(\Bbb Z_p) \ \ \ \text{such that} \ \ \
\psi_{K_L/E}(\tilde f_E[\prod_{p\leq \infty}(x_p,y_p)])=1 $$ where
$\psi_{K_L/E}$ is the Artin map and
$$\tilde f_E[(x_p,y_p)]= \begin{cases} (x_p+y_p \sqrt{-d}, x_p-y_p\sqrt{-d}) \ \ \
& \text{if $p$ splits in $E/\Bbb Q$} \\
x_p+y_p\sqrt{-d} \ \ \ & \text{otherwise.}
\end{cases} $$
\end{prop}
\begin{proof}
Let $p$ be a prime and $L_p$ be the $p$-adic completion of $L$
inside $E_p=E\otimes_\Bbb Q \Bbb Q_p$. Since the ring class field
$K_L$ of the order $L$ corresponds to the open subgroup $E^\times (
\prod_{p\leq \infty} L_p^\times)$ of $\Bbb I_E$ by the class field
theory, one only needs to show that $\prod_{p\leq \infty}L_p^\times$
is a $L$-admissible subgroup of $\Bbb I_E$ with
$L_\infty^\times=\Bbb C^\times$ by (\ref{criterion}).

Since $xL_p=L_p$ for $x\in E_\frak p^\times$ if and only if $x\in
L_p^\times$, one has
$$SO(L_p)=\{ \alpha \in L_p^\times : \ N_{E_p/\Bbb
Q_p}(\alpha)=1 \} .$$ The natural group homomorphism
$$\lambda_E : \ \ \ SO_\A(V)/SO(V)SO_\A(L) \longrightarrow \Bbb
I_E/(E^\times  \prod_{p\leq \infty} L_p^\times)
$$ is well-defined.

Let $\alpha\in E^\times$ and $i\in \prod_{p\leq \infty} L_p^\times$
such that $\alpha i\in SO_\A(V)$. Then $$N_{E/\Bbb Q}(\alpha
i)=N_{E/\Bbb Q}(\alpha )N_{E/\Bbb Q}(i)=1 .$$  Since
$$N_{E/\Bbb Q}(\alpha) \in \Bbb Q\cap \prod_{p\leq \infty} \Z_p^\times =\{\pm 1\} \ \ \
\text{and}  \ \ \ N_{E/\Bbb Q}(\alpha)>0,$$ one concludes $N_{E/\Bbb
Q}( \alpha)=N_{E/\Bbb Q}(i)=1$. This implies that $$\alpha\in SO(V)
\ \ \ \text{and} \ \ \ i\in SO_\A(L) . $$ Therefore $\lambda_E$ is
injective and the proof is complete. \end{proof}

One can recover the following classical result (see Theorem 9.4 in
\cite{Co89}) which is obvious when $L=\Bbb Z+\Bbb Z\sqrt{-d}$ is the
ring of integers of $E$ and $K_L$ is the Hilbert class field of $E$.
In general, for example if $d$ is not square free, one needs to
study the class groups of orders which are not Dedekind domains (see
\cite{Co89}).

\begin{cor} Let $l$ be an odd prime not dividing $d$. The equation $ x^2+dy^2=l$
is solvable over $\Bbb Z$ if and only if $l$ splits completely in
$K_L$.
\end{cor}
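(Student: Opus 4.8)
The plan is to read off the Corollary from Proposition \ref{special} by computing the Artin symbol $\psi_{K_L/E}(\tilde f_E[\cdots])$ explicitly. Since $l$ is odd and $l\nmid d$, it is unramified in $E=\Bbb Q(\sqrt{-d})$ and does not divide the conductor of $L=\Bbb Z[\sqrt{-d}]$ (that conductor squares into $4d$), so $L_l=(\frak o_E)_l$ and $l$ is either inert or split in $E$. I will dispose of the inert case first and then carry out the main computation in the split case.

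If $l$ is inert, then $E_l/\Bbb Q_l$ is the unramified quadratic field extension, whose norm form $N_{E_l/\Bbb Q_l}(x+y\sqrt{-d})=x^2+dy^2$ takes only nonzero values of even $l$-adic valuation; as $v_l(l)=1$, the equation has no $\Bbb Q_l$-point, so $\bold X(\Bbb Z_l)=\emptyset$ and (\ref{loc}) fails. On the other hand $l$ does not split in $E\subseteq K_L$, hence does not split completely in $K_L$. Both sides of the asserted equivalence are therefore false, and the statement holds vacuously.

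Now suppose $l\frak o_E=\frak l\bar{\frak l}$ splits. The decisive step is a valuation bookkeeping for the idele $\alpha=\tilde f_E[(x_p,y_p)_p]$ attached to a local point $(x_p,y_p)_p\in\prod_p\bold X(\Bbb Z_p)$, which satisfies $N_{E/\Bbb Q}(\alpha_p)=l$ at every place. For $p\neq l$ the value $l$ is a unit, forcing $\alpha_p\in L_p^\times$; at $p=l$, under $E_l\cong\Bbb Q_l\times\Bbb Q_l$ one has $\alpha_l=(a,b)$ with $ab=l$ and $a,b\in\Bbb Z_l$, so $\alpha_l$ is a uniformizer at exactly one of $\frak l,\bar{\frak l}$ and a unit at the other. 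Thus, modulo $\prod_pL_p^\times$, $\alpha$ is a uniformizing idele at a single prime above $l$, and by (\ref{artin}) its image under $\psi_{K_L/E}$ is the Frobenius at that prime. Conversely, taking the component at $l$ to be $(l,1)$ or $(1,l)$ realizes a local point with $\alpha$ a uniformizer at either prescribed prime. Since $\bar{\frak l}$ is the Galois conjugate of $\frak l$ over $\Bbb Q$ and $K_L/\Bbb Q$ is Galois, $\frak l$ splits completely in $K_L$ exactly when $\bar{\frak l}$ does, so the triviality of the Frobenius is independent of which prime is hit.

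Assembling these facts with Proposition \ref{special}, the criterion ``there is a local point with $\psi_{K_L/E}(\tilde f_E[\cdots])=1$'' becomes ``the Frobenius at $\frak l$ in $Gal(K_L/E)$ is trivial'', i.e. $\frak l$ splits completely in $K_L/E$; together with the splitting of $l$ in $E$ this is precisely the condition that $l$ splits completely in $K_L$. The hard part will be the reconciliation at the ramified primes of $E$, where local solubility is not automatic: if local points exist everywhere then $l$ is a local norm at each place, hence a global norm by the Hasse norm theorem, so $X_F(\Bbb Q)\neq\emptyset$ and Proposition \ref{special} genuinely applies; while if $\prod_p\bold X(\Bbb Z_p)=\emptyset$ then $l$ fails to be a local norm somewhere, so it is not a global norm, $\frak l$ is nonprincipal, and its Frobenius is already nontrivial in the Hilbert class field $H_E\subseteq K_L$, whence both sides fail simultaneously. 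Treating this borderline case is the only delicate point; the remainder is the routine valuation computation described above.
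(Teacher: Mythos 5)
Your treatment of the split case is correct and is essentially the paper's own computation: for a local integral point, $\tilde f_E[(x_p,y_p)_p]$ lies in $L_p^\times$ for $p\neq l$ and is a uniformizer at exactly one of the two primes of $E$ above $l$, so its Artin symbol is a Frobenius at a prime above $l$; your appeal to the Galois-ness of $K_L/\Q$ to see that the two Frobenii are simultaneously trivial is a legitimate replacement for the paper's device of running the argument on the two integral solutions $(x_0,\pm y_0)$. The genuine gap is exactly at the point you yourself call delicate. The implication ``$\prod_{p\leq\infty}\mathbf{X}(\Z_p)=\emptyset$ $\Rightarrow$ $l$ fails to be a local norm at some place'' is false: at a prime dividing the conductor of the order $L=\Z+\Z\sqrt{-d}$ (above all at $p=2$), integral solvability of $x^2+dy^2=l$ over $\Z_p$ is strictly stronger than $l$ being a norm from $E_p^\times$. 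Take $d=8$ and $l=11$: then $11=N_{E/\Q}(3+\sqrt{-2})$ is a global norm, hence a local norm at every place, the prime $\frak l$ of $E=\Q(\sqrt{-2})$ above $11$ is principal, and its Frobenius in the Hilbert class field $H_E=E$ (here $h_E=1$) is trivial; nevertheless $x^2+8y^2=11$ has no $\Z_2$-solution (integral solvability at $2$ forces $l\equiv 1\bmod 8$, while $11\equiv 3\bmod 8$), and $11$ does not split completely in $K_L=\Q(\zeta_8)$, where the Frobenius of $\frak l$ is the nontrivial element of $\Gal(K_L/E)$. So every assertion in your chain ``not a local norm $\Rightarrow$ not a global norm $\Rightarrow$ $\frak l$ nonprincipal $\Rightarrow$ Frobenius nontrivial in $H_E\subseteq K_L$'' fails in this example, and the Hilbert class field is in principle too small for the job: what must be shown is nontriviality of the Frobenius in $\Gal(K_L/E)$, i.e.\ in the class group of the order $L$, and that cannot be read off from the ideal class group of $\frak o_E$.

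This is precisely why the paper proves the other implication directly: assuming $l$ splits completely in $K_L$, it deduces the integral local conditions by showing, via Hilbert-symbol computations with the Artin map, that $\Q(\sqrt{q^*})\subseteq K_L$ for every odd prime $q\mid d$ (with $q^*=(\frac{-1}{q})q$), that $\Q(\sqrt{-1})\subseteq K_L$ when $d\equiv 4\bmod 8$, and that $\Q(\sqrt{-1},\sqrt{2})\subseteq K_L$ when $8\mid d$, and then applying quadratic reciprocity to turn splitting in these subfields into the congruence conditions equivalent to $\prod_{p\leq\infty}\mathbf{X}(\Z_p)\neq\emptyset$. Some argument of this kind, carried out at the level of the ring class field rather than the Hilbert class field, is unavoidable, since the conductor of $L$ is exactly what encodes the gap between integral and rational local solvability; your proposal as written does not close this case.
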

\begin{proof} Let $\bold X$ be the scheme over $\Bbb Z$ defined by the equation.
By a simple computation, $\prod_{p\leq \infty} \bold X(\Bbb Z_p)\neq
\emptyset$ if and only if $l$ splits completely in $E$ and
$$
\begin{cases}
(\frac{l}{q})=1 \ \ \ & \text{for each odd prime $q|d$} \\
l\equiv 1 \  \text{or} \  d  \mod 4 \ \ \ & \text{if $d$ is odd} \\
l\equiv 1 \ \text{or} \ d+1 \mod 8 \ \ \ & \text{if \ $d \equiv 2
\mod
4$} \\
l\equiv 1 \mod 4 \ \ \ & \text{if \ $d\equiv 4 \mod 8$} \\
l\equiv 1 \mod 8 \ \ \ & \text{if $8\mid d$}.
 \end{cases}$$

First we show that the above local conditions can be obtained when
$l$ splits completely in $K_L$. Indeed, for any odd prime $q|d$,
$E(\sqrt{q^*})/E$ is unramified for all primes except the primes
above $q$, where $q^*=(\frac{-1}{q})q$. To verify if $\sqrt{q^*} \in
K_L$, one only needs to see if $\psi (x)(\sqrt{q^*})=\sqrt{q^*}$ for
all $x\in \prod_{p\leq \infty} L_p^\times$, where $\psi$ is the
Artin map over $E$. This is equivalent to check if the product of
the Hilbert symbols $\prod_{\frak P}(q^*,x_\frak P)_\frak P=1$,
where $\frak P$ runs over all primes of $E$ and $x=(x_\frak P)\in
\prod_{p\leq \infty} L_p^\times$. It is clear that $(q^*, x_\frak
P)_\frak P=1$ for $\frak P\nmid q$.

Since $x=(x_\frak P)\in \prod_{p\leq \infty} L_p^\times$, there are
$a_p, b_p\in \Z_p$ such that
$$ \begin{cases} (x_\frak P, x_{\bar{\frak P}})=(a_p+b_p \sqrt{-d}, \ a_p-b_p
\sqrt{-d}) \ \ \ & \text{ if $p=\frak P\bar{\frak P}$ splits in
$E/\Q$ }
\\  x_\frak
P= a_p+b_p \sqrt{-d}  \ \ \ & \text{otherwise .}
\end{cases} $$ Hence
$$\prod_{\frak P\mid q} (q^*, x_\frak P)_\frak P=(q^*, a_q^2-b_q^2 d)_q=1 $$ by
(1.5.3) Proposition, (7.1.4) Corollary in \cite{NSW} and $q|d$.
Therefore $\sqrt{q^*} \in K_L$. This implies that $l$ splits in
$\Bbb Q(\sqrt{q^*})$ and $(\frac{q^*}{l})=(\frac{l}{q})=1$. This
means that $l$ is a square in $\Z_q$ for any odd prime $q|d$. By the
Hilbert reciprocity law, one has
$$(-d,l)_2=\prod_{\text{odd primes $q$}}(-d,l)_q=\prod_{q|dl} (-d,l)_q = 1. $$ One gets the local
conditions for odd $d$ and $d \equiv 2 \mod 4$ by $(-d,l)_2=1$.

If $d \equiv 4 \mod 8$, then $\Bbb Q(\sqrt{-1})\subset K_L$ by the
same argument as above. So $l\equiv 1 \mod 4$.

If $8\mid d$, then $\Bbb Q(\sqrt{-1},\sqrt{2}) \subset K_L$. So
$l\equiv 1 \mod 8$.

Let $(x_p,y_p)\in  \bold X(\Bbb Z_p)$ for $p\leq \infty$. Then
$\tilde f_E[(x_p,y_p)]\in L_p^\times$ for $p\neq l$. Since $l$
splits in $E/\Bbb Q$ by the local conditions and
$$(x_l+y_l\sqrt{-d})(x_l-y_l\sqrt{-d})=l, $$
one can write $\frak L_1$ and $\frak L_2$ as two primes in $E$ above
$l$ and assume $\tilde f_E[(x_l,y_l)]$ is a prime element at
$E_{\frak L_1}$ and a unit at $E_{\frak L_2}$. By the class field
theory, one concludes that $\frak L_1$ splits in $K_L/E$ if and only
if
$$\psi_{K_L/E}(\tilde f_E[\prod_{p\leq \infty}(x_p,y_p)])=1.$$

If $l$ splits completely in $K_L/\Bbb Q$, then $\bold X(\Bbb Z)\neq
\emptyset$ by Proposition \ref{special}.

Conversely, choose two integral solutions $(x_0,y_0)$ and
$(x_0,-y_0)$ in ${\bf X}(\Z)$. One obtains that both $\frak L_1$ and
$\frak L_2$ split in $K_L/E$ by the above argument. The proof is
complete.
\end{proof}

We give one more example to explain that the method provided in the
previous sections is beyond Gauss' method which only applies to the
binary quadratic forms.

For any positive integer $n$, one can write $n=2^s{p_1}^{e_1}\cdots
{p_g}^{e_g}$ and define
$$\aligned & D(n)=\{p_1, \cdots, p_g \} \cr
& D_1= \{ p\in D(n) : \ (\frac{-1}{p})=-(\frac{2}{p})=1  \} \cr &
D_2=\{p\in D(n) : \ (\frac{-1}{p})=(\frac{2}{p})=1 \text{ and }
x^4\equiv 2 \ mod\ p \ \text{ is not solvable} \}.
\endaligned$$

\begin{exa} Let $n$ be a positive integer with the above notation.
Then the equation $$x^2+64y^2+64y+16=n$$ is solvable over $\Bbb Z$
if and only if $p_j\equiv 1 \mod 4$ for odd $e_j$ with $1\leq j\leq
g$ and one of the following conditions holds

(1) $s=0$ and $n\equiv 1 \mod 8$ and $$(D_1\neq \emptyset \ \ \
\text{or} \ \ \ \sum_{p_j\in D_2}e_j \equiv 1 \mod 2 \ \text{for} \
D_1=\emptyset) .$$

(2) $s=2$ and $2^{-2}n \equiv 5 \mod 8$.

(3) $s= 4, 5$.

\end{exa}
\begin{proof} Let $\bold X$ be the scheme over $\Bbb Z$ defined by the equation.
By a simple computation, the necessary and sufficient conditions for
$\prod_{p\leq \infty} \bold X(\Bbb Z_p)\neq \emptyset$ are
$p_j\equiv 1 \mod 4$ for odd $e_j$ with $1\leq j\leq g$
and $$\begin{cases} s=0 \ \ \ \text{and} \ \ \ n\equiv 1 \mod 8 \\
s=2 \ \ \ \text{and} \ \ \ 2^{-2}n \equiv 5 \mod 8 \\
s=4,5. \end{cases}$$

Let $E=\Bbb Q(\sqrt{-1})$ and $L=\Bbb Z+\Bbb Z 8\sqrt{-1}$ be an
order in $E$ and $u_0=4\sqrt{-1}$. For any prime $p$,
$E_p=E\otimes_\Bbb Q \Bbb Q_p$ and $L_p$ is the $p$-adic completion
of $L$ inside $E_p$. Write $E_\infty=L_\infty=\Bbb C$. Then
$$\lambda_E[SO_\A(L+ u_0)]\subseteq \prod_{p\leq \infty} L_p^\times$$ and
the induced map
$$\lambda_E : \ \ \ SO_\A(V)/SO(V)SO_\A(L+ u_0) \longrightarrow \Bbb
I_E/E^\times  ( \prod_{p\leq \infty} L_p^\times)
$$ is injective by the same argument in Proposition \ref{special}.
This implies that $\prod_{p\leq \infty} L_p^\times$ is $(L+
u_0)$-admissible. The abelian extension $K_L$ of $E$ corresponding
to $E^\times \prod_{p\leq \infty} L_p^\times$ is the ring class
field of $L$. By Proposition 9.5 in \cite{Co89},
$K_L=E(\root{4}\of{2})$. Then $K_L/E$ is unramified at all places
except $2$ and $2$ is totally ramified in $K_L/\Bbb Q$ and
$Gal(K_L/E)=\mu_4$ where $\mu_4$ is the set of all $4$-th roots of
unity.

Since
 $p\in L_2^\times$ for any odd prime $p$, one concludes that
 \begin{equation} \label{p} 1=\begin{cases} \psi_{K_L/E} (p_\frak P) \ \ \ & \text{if $p$ is inert in
$E/\Bbb Q$
with $\frak P\mid p$} \\
\psi_{K_L/E}(p_{\frak P_1}) \psi_{K_L/E} (p_{\frak P_2}) \ \ \ &
\text{if $p=\frak P\bar{\frak P}$ in $E/\Bbb Q$}
\end{cases} \end{equation}
where $p_\frak P$ (resp. $p_{\bar{\frak P}}$) is in $\Bbb I_E$ such
that its $\frak P$ (resp. $\bar{\frak P}$) component is $p$ and the
other components are 1 and $\psi_{K_L/E}$ is the Artin symbol.

Define
$$\tilde f_E[(x_p,y_p)]= \begin{cases} (x_p+4(2y_p+1) \sqrt{-1}, x_p-4(2y_p+1)\sqrt{-1}) \ \ \
& \text{if $p$ splits in $E/\Bbb Q$} \\
x_p+4(2y_p+1)\sqrt{-1} \ \ \ & \text{otherwise}
\end{cases} $$
for any $(x_p,y_p)\in \bold X(\Bbb Z_p)$.

If $(p,2n)=1$, then
$$\tilde f_E[(x_p,y_p)] \in L_p^\times  \ \ \ \text{and} \ \ \
\psi_{K_L/E}(\tilde f_E[(x_p,y_p)])=1 ,$$ where $\tilde
f_E[(x_p,y_p)]$ is also regarded as an element in $\Bbb I_E$ such
that the components above $p$ are given by the value of $\tilde
f_E[(x_p,y_p)]$ and 1 otherwise.

If $p$ is inert in $E/\Bbb Q$ with $p\in D(n)$, then
$\psi_{K_L/E}(\tilde f_E[(x_p,y_p)])=1$ by (\ref{p}).

If $p$ splits in $E/\Bbb Q$ with $p\in D(n)$, one has
$$ \begin{cases} \psi_{K_L/E}(p_{\frak P}) =\psi_{K_L/E} (p_{\bar{\frak P}})=-1  \ \ \ & \text{for $p\in D_2$} \\
\psi_{K_L/E}(p_{\frak P}) =-\psi_{K_L/E} (p_{\bar{\frak P}})=\pm
\sqrt{-1} \ \ \ & \text{for $p\in D_1$  }
\end{cases}  $$ by the definition of $D_1$, $D_2$ and $(\ref{p})$.  Since
$$(x_p+4(2y_p+1) \sqrt{-1})(x_p-4(2y_p+1)\sqrt{-1})=n , $$ one has
\begin{equation} \label{symbol}
\psi_{K_L/E}(\tilde f_E[(x_p,y_p)])=\begin{cases} (-1)^{e} \ \ \ &
\text{if $p\in D_2$} \cr (-1)^{a}(\pm \sqrt{-1})^{e} \ \ \ &
\text{if $p\in D_1$} \cr 1 \ \ \ & \text{otherwise}
\end{cases}
\end{equation}
where $a=ord_{\frak P}(x_{p}+4(2y_{p}+1)\sqrt{-1})$ and $e$ is the
exponent of $p$ inside $n$. By the Hensel's lemma, there are two
local solutions $(x_{p},y_{p})\in \bold X(\Bbb Z_{p})$ such that
$a=0$ and $1$ respectively. Therefore one can rewrite (\ref{symbol})
for $p\in D_1$ as
\begin{equation} \label{wish} \psi_{K_L/E}(\tilde f_E[(x_{p},y_{p})])=\begin{cases} \pm \sqrt{-1} \ \ \ & \text{ $e$ is
odd} \cr \pm 1 \ \ \ & \text{ $e$ is even} \end{cases}
\end{equation} where the sign can be chosen as one wishes.

Suppose $s=4$ or $5$. Since $x^2+(2y+1)^2=2^{-4}n$ is solvable over
$\Bbb Z$ by the above local conditions, one concludes that
$x^2+16(2y+1)^2=n$ is solvable over $\Bbb Z$.

Suppose $s=2$. Then the local solution $(x_2,y_2)\in \bold X(\Bbb
Z_2)$ satisfies $x_2\equiv 2 \mod 4$. Let $v$ be the unique place of
$E$ over $2$.  Since
$$(2,x_2+4(2y_2+1)\sqrt{-1})_v=(2,2)_v\cdot (2,2^{-1}x_2+2(2y_2+1)\sqrt{-1})_v=(2,5)_2=-1$$
by (1.5.3) Proposition and (7.1.4) Corollary in \cite{NSW}, this
implies that
$$\psi_{K_L/E}(\tilde f_E[(x_2,y_2)])=\pm \sqrt{-1} .$$
Since $2^{-2}n \equiv 5 \mod 8$, the number of primes $p_j$ in $D_1$
with odd $e_j$ is odd. Therefore there are an even number of local
primes $p$ such that $$\psi_{K_L/E}(\tilde f_E[(x_p,y_p)])=\pm
\sqrt{-1}$$ by the above computation and (\ref{wish}). By choosing
the sign properly, one concludes that there is
$$(x_p,y_p)_{p\leq \infty}\in \prod_{p\leq \infty} \bold X(\Bbb
Z_p) \ \ \ \text{such that} \ \ \ \psi_{K_L/E}(\tilde
f_E[\prod_{p\leq \infty}(x_p,y_p)])=1.
$$ Therefore $\bold X(\Bbb Z)\neq \emptyset$ by (\ref{criterion}).

Suppose $s=0$. Then the local solution $(x_2,y_2)\in \bold X(\Bbb
Z_2)$ satisfies $x_2\equiv 1 \mod 2$. Let $v$ be the unique place of
$E$ over $2$. Since
$$\tilde f_E[(x_2,y_2)]=x_2+4(2y_2+1)\sqrt{-1}\equiv x_2+4\sqrt{-1}\equiv 1+4\sqrt{-1} \mod
L_2^\times, $$  and $(1+4\sqrt{-1})\in E^\times$ with
$(1+4\sqrt{-1})(1-4\sqrt{-1})=17$, one concludes that
$\psi_{K_L/E}(\tilde f_E[(x_2,y_2)])$ is equal to the Frobenius of a
prime above $17$ in $Gal(K_L/E)$ which is $-1$.

If $D_1\neq \emptyset$, one has that either all $e_j$ for $p_j\in
D_1$ are even or the number of primes $p_j$ in $D_1$ with odd $e_j$
is even by using $n\equiv 1 \mod 8$. By (\ref{wish}) and choosing
the sign properly, there is $$(x_p,y_p)_{p\leq \infty}\in
\prod_{p\leq \infty} \bold X(\Bbb Z_p) \ \ \ \text{such that} \ \ \
\psi_{K_L/E}(\tilde f_E[\prod_{p\leq \infty}(x_p,y_p)])=1. $$
Therefore $\bold X(\Bbb Z)\neq \emptyset$ by (\ref{criterion}).

If $D_1=\emptyset$, then  $$ \psi_{K_L/E}(\tilde f_E[\prod_{p\leq
\infty}(x_p,y_p)])=(-1)^{1+\sum_{p_j\in D_2}e_j}$$ by (\ref{symbol})
for any $(x_p,y_p)_{p\leq \infty}\in \prod_{p\leq \infty} \bold
X(\Bbb Z_p)$. By (\ref{criterion}), one concludes $$\bold X(\Bbb
Z)\neq \emptyset \ \ \  \Leftrightarrow \ \ \ \sum_{p_j\in D_2}e_j
\equiv 1 \mod 2 $$ and the proof is complete.
\end{proof}

\bigskip

\section{Examples with real quadratic splitting fields}

Gauss' method can not be applied to integral representations of
indefinite binary quadratic forms in general either because the
norms of the elements of real quadratic fields are not always
positive and one can not decide the sign effectively. The typical
example is to determine exactly when the negative Pell equation
$x^2-dy^2=-1$ is solvable over $\Z$ for a positive integer $d$.

In order to apply the method in \S \ref{quadratic}, for example, to
determine all integers represented by $x^2-dy^2$ for a positive
integer $d$ explicitly, one can take the order $L=\Bbb Z+ \Bbb Z
\sqrt{d}$ inside $E=\Bbb Q(\sqrt{d})$ and needs to determine the
abelian extension of $E$ corresponding to an $L$-admissible subgroup
of $\Bbb I_E$ explicitly. There is no general theory for obtaining
such abelian extensions for real quadratic fields but ad hoc method.
We will keep the same notation as that in \S \ref{quadratic}.

With some extra conditions, one still has the analogous result to
Proposition \ref{special} for real quadratic fields.

\begin{prop} \label{real} Let $\bold X$ be the scheme defined by
$x^2-dy^2=n$ for some integer $n$ and $K_L$ be the ring class field
corresponding to the order $L$. Suppose one of the following
conditions holds:

(1) The equation $x^2-dy^2=-1$ has an integer solution.

(2) The equation $x^2-dy^2=-1$ has no local integral solutions at a
prime.

Then $\bold X(\Bbb Z)\neq \emptyset$ if and only if there is
$$(x_p,y_p)_{p\leq \infty}\in \prod_{p\leq \infty} \bold X(\Bbb
Z_p) \ \ \ \text{such that} \ \ \ \psi_{K_L/E}(\tilde
f_E[\prod_{p\leq \infty}(x_p,y_p)])=1 $$ where $\psi_{K_L/E}$ is the
Artin map and
$$\tilde f_E[(x_p,y_p)]= \begin{cases} (x_p+y_p \sqrt{d}, x_p-y_p\sqrt{d}) \ \ \
& \text{if $p$ splits in $E/\Bbb Q$} \\
x_p+y_p\sqrt{d} \ \ \ & \text{otherwise.}
\end{cases} $$
\end{prop}
\begin{proof} The same argument as that in Proposition
\ref{special} with the crucial step to show that $\lambda_E$ is
injective is available except $N_{E/\Q}(\alpha)>0$. Suppose
$N_{E/\Q}(\alpha)=-1$. Then $N_{E/\Q}(i)=-1$. This contradicts
Condition (2). If Condition (1) holds, there are $x_0,y_0\in \Z$
such that $x_0^2-dy_0^2=-1$. One can replace $\alpha$ by
$\alpha(x_0-y_0\sqrt{d})$ and $i$ by $i (x_0-y_0\sqrt{d})^{-1}$. The
proof is complete.
\end{proof}

When $d$ is a prime, the condition (1) or (2) in Proposition
\ref{real} will be satisfied. For $d=2l$ with a prime $l$, the
condition (2) in Proposition \ref{real} holds at prime $2$ if
$l\equiv 3 \mod 4$ and the condition (1) in Proposition \ref{real}
holds if $l\equiv 5 \mod 8$ by \cite{D}. Such cases can also be
treated by using the Gauss' method.

However the Gauss' method only uses the ring class fields to detect
which ideals are principal. Now we consider the case that one has to
use the abelian extensions beyond the ring class fields. Let $l$ be
a prime with $l\equiv 1 \mod 8$. Fix an integral solution
$(x_0,y_0,z_0)$ of the equation $x^2-2ly^2=2z^2$ such that $x_0>0$
and $(x_0,y_0,z_0)=1$
 by the Hasse principle. Let
$\Theta=E(\sqrt{x_0-y_0\sqrt{2l}})$. Then $\Theta$ is totally real
and $\Theta/E$ is unramified over all primes except the prime above
$2$ and $2$ is totally ramified in $\Theta/\Bbb Q$. This $\Theta$
will play role for solving the equation $x^2-2l y^2=n$ over $\Bbb
Z$.

\begin{lem} \label{computation} Let $l$ be a prime with $l\equiv 1 \mod 8$
satisfying $2l=r^2+s^2$ for two integers $r$ and $s$ with $s \equiv
\pm 3 \mod 8$. If $x_2$ and $y_2$ in $\Q_2$ satisfy
$x_2^2-2ly_2^2=-1$, then the Hilbert symbol
$$(x_2+y_2\sqrt{2l}, x_0-y_0\sqrt{2l})_v=-1$$
where $v$ is the unique prime in $E$ above $2$ and $(x_0, y_0)$ is
given as above.
\end{lem}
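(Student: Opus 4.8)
The plan is to do everything inside the completion $E_v=E\otimes_\Q\Q_2$, and the first task is to record the local structure. Since $l\equiv 1\pmod 8$ the unit $l$ is a square in $\Q_2$, so $E_v=\Q_2(\sqrt{2l})=\Q_2(\sqrt{2})$ is the ramified quadratic extension, $v$ is the unique prime above $2$, and $\pi:=\sqrt{2l}$ is a uniformizer. Next I would pin down the $2$-adic shape of the fixed solution: reducing $x_0^2-2ly_0^2=2z_0^2$ modulo powers of $2$ and using $\gcd(x_0,y_0,z_0)=1$ forces $y_0,z_0$ odd and $x_0\equiv 2\pmod 4$. Hence $\alpha:=x_2+y_2\sqrt{2l}$ is a unit (its norm is $-1$) while $\beta:=x_0-y_0\sqrt{2l}$ has $v(\beta)=1$, i.e. $\beta$ is a uniformizer; this parity of valuations is ultimately what produces the sign.

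Before computing I would check that the symbol is independent of the chosen local solution $(x_2,y_2)$. Two solutions differ by a norm-one element, which by Hilbert 90 has the form $\gamma/\bar\gamma$; from $\beta\bar\beta=2z_0^2$ one gets $\beta/\bar\beta\equiv 2\pmod{(E_v^\times)^2}$, so the change in the symbol is $(\gamma,2)_v=(N_{E_v/\Q_2}\gamma,2)_2$ by the projection formula, and this is trivial because $N_{E_v/\Q_2}\gamma$ is by definition a norm from $\Q_2(\sqrt2)$. With independence in hand I may replace $\alpha$ by the convenient solution $x_2+\sqrt{2l}$ with $x_2^2=2l-1$ (possible since $2l-1\equiv 1\pmod{16}$ is a square, and one root has $x_2\equiv 1\pmod 8$); absorbing the square $x_2$ and the high-level $1$-units then gives $\alpha\equiv 1+\sqrt{2l}\pmod{(E_v^\times)^2}$.

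I would then expand $(1+\sqrt{2l},\beta)_v$ bilinearly, repeatedly using the projection formula $(\,\cdot\,,c)_v=(N_{E_v/\Q_2}(\cdot),c)_2$ for $c\in\Q_2^\times$ and the fact that a symbol of two elements of $\Q_2^\times$ dies on restriction to $E_v$. Writing $\beta=-y_0\,\pi\,(1-b\pi)$ with $b=x_0'/(ly_0)\in\Z_2^\times$ (where $x_0=2x_0'$), this separates the symbol into a rational factor, the \emph{uniformizer} symbol $(1+\sqrt{2l},\sqrt{2l})_v$, and a residual \emph{unit} symbol $(1+\sqrt{2l},1-b\sqrt{2l})_v$. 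The uniformizer symbol I can evaluate cleanly: in $M:=E_v(\sqrt{1+\pi})$ the element $\Pi:=\sqrt{1+\pi}-1$ is a uniformizer with $N_{M/E_v}(\Pi)=-\pi$, so $(1+\sqrt{2l},-\sqrt{2l})_v=1$; and since $N_{E_v/\Q_2}(1+\pi)=1-2l\equiv -1\pmod{(\Q_2^\times)^2}$ gives $(1+\sqrt{2l},-1)_v=(-1,-1)_2=-1$, I conclude $(1+\sqrt{2l},\sqrt{2l})_v=-1$. Collecting the rational factor and this value, what remains to prove is the single identity $(-1,y_0)_2\,(1+\sqrt{2l},1-b\sqrt{2l})_v=-1$.

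The hard part will be the residual wild symbol $(1+\sqrt{2l},1-b\sqrt{2l})_v$: a genuinely $2$-adic (non-tame) Hilbert symbol between two units of the ramified field $E_v$, which the projection formula cannot reduce to $\Q_2$. Here I expect to compute it as an explicit function of the class of $b$ modulo a small power of $2$ — equivalently, to decide whether $1-b\sqrt{2l}\in N_{M/E_v}M^\times$ by solving $p^2-(1+\pi)q^2=1-b\pi$ over $E_v$ and tracking the obstruction modulo $8$. The hypothesis $2l=r^2+s^2$ with $s\equiv\pm3\pmod 8$ is precisely what controls this final step: combined with the relation $2x_0'^2-ly_0^2=z_0^2$ satisfied by the solution, it fixes the relevant congruence of $b$ together with $y_0\bmod 4$, forcing $(-1,y_0)_2\,(1+\sqrt{2l},1-b\sqrt{2l})_v=-1$. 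Assembling the three pieces then yields $(x_2+y_2\sqrt{2l},\,x_0-y_0\sqrt{2l})_v=-1$, and this explicit $2$-adic evaluation of the unit symbol, fed by the sum-of-two-squares congruence, is the main obstacle.
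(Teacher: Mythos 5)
Your local reductions are correct as far as they go: the normalization of the local solution to $1+\sqrt{2l}$ modulo squares, the factorization $x_0-y_0\sqrt{2l}=-y_0\pi(1-b\pi)$ with $b=x_0'/(ly_0)$, and the evaluation $(1+\sqrt{2l},\sqrt{2l})_v=-1$ via the uniformizer $\sqrt{1+\pi}-1$ all check out; moreover your ``independence of the local solution'' step is essentially the paper's own opening move (Hilbert 90 plus the projection formula, using $N(x_0-y_0\sqrt{2l})=2z_0^2$ and $l\equiv 1 \bmod 8$). But the proof stops exactly where the hypothesis on $r,s$ has to be used, and the plan you announce for that final step rests on a false premise. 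You claim that the hypothesis $2l=r^2+s^2$, $s\equiv\pm 3\bmod 8$, \emph{combined with the local relation} $2x_0'^2-ly_0^2=z_0^2$, pins down the congruence class of $b$ (and $y_0\bmod 4$). It cannot: for \emph{any} units $x_0',y_0\in\Z_2^\times$ one has $2x_0'^2-ly_0^2\equiv 1$ or $9 \bmod 16$, hence this is a square in $\Z_2^\times$, so every pair of unit values $(b,y_0)$ is realized by some local solution, whatever $r$ and $s$ are. The hypothesis on $s$ has no purely $2$-adic manifestation in the solution; the correlation between $b$ for the \emph{global} solution $(x_0,y_0,z_0)$ and the residues of $r,s$ modulo $8$ is itself a global fact --- indeed it is essentially the content of the lemma (for $l=41$, where $2l=9^2+1^2$ fails the hypothesis, the symbol is $+1$, so your residual unit symbol $(1+\pi,1-b\pi)_v$ is genuinely nonconstant in $b$). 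So no amount of local computation of that wild symbol can close the argument.

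The missing ingredient is global, and it is how the paper finishes without ever evaluating a wild unit symbol. Since the pairing only depends on the norm class of the first argument (your step, and the paper's), one may replace $x_2+y_2\sqrt{2l}$ by the \emph{global} norm-$(-1)$ element $(r-\sqrt{2l})s^{-1}$. Then $(s,x_0-y_0\sqrt{2l})_v=(s,2z_0^2)_2=(s,2)_2=-1$, which is where $s\equiv\pm 3\bmod 8$ enters locally; and $(r-\sqrt{2l},x_0-y_0\sqrt{2l})_v=1$ by Hilbert reciprocity, because all the other local symbols vanish: at finite $\frak p\neq v$ both arguments have even valuation (from $(r-\sqrt{2l})(r+\sqrt{2l})=-s^2$, $(x_0-y_0\sqrt{2l})(x_0+y_0\sqrt{2l})=2z_0^2$, $(r,s)=1$ and $(x_0,y_0,z_0)=1$), and at the real places $x_0-y_0\sqrt{2l}>0$. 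Thus the hypothesis is used through the \emph{existence of a global element of norm $-s^2$}, fed into the product formula --- not through a congruence on $b$. If you want to keep your framework, you must replace your last step by this reciprocity argument (at which point your explicit $2$-adic expansion becomes unnecessary).
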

\begin{proof} For any $\xi\in E_v^\times $ with $N_{L_v/\Bbb Q_2}(\xi)=1$, there
exists $\beta\in E_v^\times$ such that $\xi=\sigma(\beta)\beta^{-1}$
by Hilbert 90, where $\sigma$ is the non-trivial element in
$Gal(E_v/\Bbb Q_2)$. Then
$$\aligned & (\xi, x_0-y_0\sqrt{2l})_v=(\sigma(\beta)\beta^{-1},
x_0-y_0\sqrt{2l})_v \cr  = \ & (N_{E_v/\Bbb Q_2}(\beta),
x_0-y_0\sqrt{2l})_v=(N_{E_v/\Bbb Q_2}(\beta),2)_2 \endaligned $$ by
(1.5.3) Proposition and (7.1.4) Corollary in \cite{NSW}. Since
$l\equiv 1 \mod 8$ and $E_v=\Bbb Q_2(\sqrt{2l})$, one has
$$(\xi, x_0-y_0\sqrt{2l})_v =(N_{E_v/\Bbb Q_2}(\beta),2)_2=(N_{E_v/\Bbb Q_2}(\beta),2l)_2=1 .$$
Therefore
$$\aligned & (x_2+y_2\sqrt{2l}, x_0-y_0\sqrt{2l})_v=((r-\sqrt{2l})s^{-1}, x_0-y_0\sqrt{2l})_v \\
       = \ & (r-\sqrt{2l}, x_0-y_0\sqrt{2l})_v (s, x_0-y_0\sqrt{2l})_v  \\
       = \ & (r-\sqrt{2l}, x_0-y_0\sqrt{2l})_v(s,2z_0^2)_2 = -(r-\sqrt{2l}, x_0-y_0\sqrt{2l})_v. \endaligned  $$
By the Hilbert reciprocity law, one has
$$(r-\sqrt{2l}, x_0-y_0\sqrt{2l})_v=\prod_{\frak p\neq v}(r-\sqrt{2l}, x_0-y_0\sqrt{2l})_{\frak p}.$$
Since $$(r-\sqrt{2l})(r+\sqrt{2l})=-s^2 \ \ \ \text{and} \ \ \
(x_0-y_0\sqrt{2l})(x_0+y_0\sqrt{2l})=2z_0^2$$ with $(r,s)=1$ and
$(x_0,y_0,z_0)=1$ respectively, one has $$ord_\frak
p(r-\sqrt{2l})\equiv ord_\frak p(x_0-y_0\sqrt{2l})\equiv 0 \mod 2
$$ for $\frak p\neq v$ and $\frak p<\infty_E$. Since
$x_0-y_0\sqrt{2l}>0$ over $\frak p\in \infty_E$, one obtains that
$$(r-\sqrt{2l}, x_0-y_0\sqrt{2l})_{\frak p}=1 $$ for all $\frak p\neq
v$. One concludes that $(x_2+y_2\sqrt{2l}, x_0-y_0\sqrt{2l})_v=-1$.
\end{proof}

The immediate corollary is the following result which was first
proved in \cite{Ep} by using the continued fractional method and was
reproved in \cite{Re} by using the reciprocity law in a quite
complicated way.

\begin{cor} \label{er} [Epstein - R\'{e}dei] If $l$ is a prime with $l\equiv 1 \mod 8$
satisfying $2l=r^2+s^2$ for two integers $r$ and $s$ with $s \equiv
\pm 3 \mod 8$, then the equation $x^2-2ly^2=-1$ is not solvable over
$\Bbb Z$.
\end{cor}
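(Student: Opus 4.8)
The plan is to exhibit $x^2-2ly^2=-1$ as having a (finite, abelian) Brauer--Manin obstruction coming from the field $\Theta=E(\sqrt{x_0-y_0\sqrt{2l}})$ introduced above, where $E=\Bbb Q(\sqrt{2l})$ and $(x_0,y_0,z_0)$ is the fixed solution of $x^2-2ly^2=2z^2$. Let $\bold X$ be the $\Bbb Z$-scheme defined by $x^2-2ly^2=-1$. If $\prod_{p\leq\infty}\bold X(\Bbb Z_p)=\emptyset$ there is nothing to prove, so I may assume local solubility everywhere. By Remark \ref{remark} (together with Remark \ref{modify}, which lets me use $\tilde f_E$ and the Artin map over $E$ in place of $f_E$), it then suffices to prove that, for the abelian extension $\Theta/E$,
$$\psi_{\Theta/E}(\tilde f_E[\prod_{p\leq\infty}(x_p,y_p)])\neq 1$$
for \emph{every} $\prod_{p\leq\infty}(x_p,y_p)\in\prod_{p\leq\infty}\bold X(\Bbb Z_p)$; since $\Gal(\Theta/E)=\{\pm 1\}$, any such non-triviality forces $\bold X(\Bbb Z)=\emptyset$.

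The key structural observation is that $N_{E/\Bbb Q}(x_p+y_p\sqrt{2l})=x_p^2-2ly_p^2=-1$ is a unit for every $p$, so each component of the idele $\xi:=\tilde f_E[\prod_p(x_p,y_p)]$ is a local unit at every finite prime of $E$. Writing $\Theta=E(\sqrt{a})$ with $a=x_0-y_0\sqrt{2l}$, the Artin symbol is the product of local norm-residue (Hilbert) symbols $\prod_{\frak P}(a,\xi_{\frak P})_{\frak P}$ over the primes $\frak P$ of $E$, exactly as in the Hilbert-symbol computations of the previous section. Now $\Theta/E$ is unramified outside the unique prime $v$ above $2$, and a unit argument pairs trivially at an unramified prime, so every finite $\frak P\neq v$ contributes $1$; moreover $\Theta$ is totally real (equivalently $a>0$ under both embeddings of $E$), so $\Theta/E$ splits at the two archimedean places and those contribute $1$ as well. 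Hence the entire Artin symbol collapses to the single local factor at $v$.

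It remains to evaluate $(a,\xi_v)_v=(x_0-y_0\sqrt{2l},\,x_2+y_2\sqrt{2l})_v$, where $(x_2,y_2)\in\bold X(\Bbb Z_2)$ is the $2$-adic component: since $2$ ramifies in $E/\Bbb Q$, the map $\tilde f_E$ uses its non-split branch and produces precisely $x_2+y_2\sqrt{2l}$ at $v$, with $x_2^2-2ly_2^2=-1$. This is exactly the Hilbert symbol of Lemma \ref{computation}, which under the hypotheses $l\equiv 1\bmod 8$ and $s\equiv\pm3\bmod 8$ equals $-1$. By symmetry of the Hilbert symbol the global Artin symbol is the non-trivial element of $\Gal(\Theta/E)$ for every adelic local point, and Remark \ref{remark} yields $\bold X(\Bbb Z)=\emptyset$, i.e.\ $x^2-2ly^2=-1$ has no integral solution.

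The substantive work has already been isolated in Lemma \ref{computation}; for the corollary itself the only delicate point is the clean reduction of the global Artin symbol to the single local symbol at $v$. I expect this reduction to be the main thing to get right: it hinges on the norm being the unit $-1$ (so $\xi$ is everywhere a unit idele) combined with the ramification/splitting profile of $\Theta/E$, and one must keep in mind that a unit still pairs \emph{non-trivially} at the ramified prime $v$ --- which is precisely where Lemma \ref{computation} supplies the obstruction.
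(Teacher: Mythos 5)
Your proof is correct and takes essentially the same route as the paper: reduce via Remarks \ref{remark} and \ref{modify} to showing that $\psi_{\Theta/E}(\tilde f_E[\cdot])\neq 1$ on every adelic point, collapse the global Artin symbol to the single local Hilbert symbol at $v$ (the idele components are units away from $v$ since the norm is $-1$, $\Theta/E$ is unramified there, and $\Theta$ is totally real), and invoke Lemma \ref{computation}. The paper's proof is just a compressed version of this, asserting the collapse $\psi_{\Theta/E}(\tilde f_E[\prod_{p\leq\infty}(x_p,y_p)])=(x_2+y_2\sqrt{2l},\,x_0-y_0\sqrt{2l})_v$ in one line; your write-up supplies exactly the details that equality hides.
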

\begin{proof} Let $\bold X$ be the scheme defined by
$x^2-2ly^2=-1$. Then $$ \psi_{\Theta/E}(\tilde f_E[(x_p,y_p)_{p\leq
\infty}])=(x_2+y_2\sqrt{2l}, x_0-y_0\sqrt{2l})_v=-1
$$ for any $\prod_{p\leq \infty}(x_p,y_p)\in \prod_{p\leq \infty}
\bold X(\Bbb Z_p)$ by Lemma \ref{computation}. The result follows
from Remark \ref{remark}.
\end{proof}

It is natural to ask how to decide the solvability of the above
equation if one replaces $-1$ by an integer $n$.

\begin{prop}\label{extra} Let $l$ be a prime as in Lemma \ref{computation} and $\bold X$ be the scheme
over $\Bbb Z$ defined by $x^2-2ly^2=n$ for some integer $n$. Then
$\bold X(\Bbb Z)\neq \emptyset$ if and only if there is
$(x_p,y_p)_{p\leq \infty}\in \prod_{p\leq \infty} \bold X(\Bbb Z_p)$
such that
$$\psi_{H/E}(\tilde f_E[(x_p,y_p)_{p\leq \infty}])=1 \ \ \
\text{and} \ \ \ \psi_{\Theta/E}(\tilde f_E[(x_p,y_p)_{p\leq
\infty}])=1 $$ where $H$ is the Hilbert class field of $E$, $\Theta$
is the quadratic extension of $E$ defined as above, both
$\psi_{H/E}$ and $\psi_{\Theta/E}$ are Artin maps and $$\tilde
f_E[(x_p,y_p)]=
\begin{cases} (x_p+y_p \sqrt{2l}, x_p-y_p\sqrt{2l}) \ \ \
& \text{if $p$ splits in $E/\Bbb Q$} \\
x_p+y_p\sqrt{2l} \ \ \ & \text{otherwise.}
\end{cases} $$

\end{prop}
\begin{proof}
Let $L=\Bbb Z + \Bbb Z \sqrt{2l}$ be the maximal order of $E$. For
any prime $p$, $E_p=E\otimes_\Bbb Q \Bbb Q_p$ and $L_p$ is the
$p$-adic completion of $L$ inside $E_p$. Write
$E_\infty=L_\infty=\Bbb R$.

Let $v$ be the unique prime of $E$ above $2$. By the computation in
Lemma \ref{computation}, one has $(\xi, x_0-y_0\sqrt{2l})_v=1$ for
any $\xi\in E_v^\times $ with $N_{E_v/\Bbb Q_2}(\xi)=1$. This
implies that
$$\lambda_E(SO(L_2))\subseteq N_{\Theta_\frak V/E_v}(\Theta_\frak
V^\times) $$ where $\frak V$ is the unique prime of $\Theta$ above
$v$. Since $\Theta/E$ is unramified over all primes except $v$, one
concludes that $\lambda_E(SO_\A(L))\subseteq N_{\Theta/E}(\Bbb
I_{\Theta})$. Therefore the natural group homomorphism
$$\lambda_E : \ SO_\A(V)/SO(V)SO_\A(L)\longrightarrow [\Bbb I_E/(E^\times  N_{\Theta/E}(\Bbb
I_{\Theta}))] \times [\Bbb I_E/(E^\times \prod_{p\leq \infty}
L_p^\times)]$$ is well-defined.

Let $u\in ker \lambda_E$. Then there are $\alpha\in E^\times$ and
$i\in \prod_{p\leq \infty} L_p^\times$ with $\lambda_E(u)=\alpha i$.
Therefore
$$N_{E/\Bbb Q}(\alpha)=N_{E/\Bbb Q}(i)^{-1} \in \Bbb Q^\times \cap
(\prod_{p\leq \infty}\Z_p^\times )=\{\pm 1\} . $$

Suppose $N_{E/\Bbb Q}(\alpha)\neq 1$. Then $N_{E/\Bbb
Q}(\alpha)=N_{E/\Bbb Q}(i)=-1$. Write $i=(i_\frak p)_{\frak p}\in
\Bbb I_E$. Since $\Theta/E$ is unramified over all primes of $E$
except $v$, one has $\psi_{\Theta/E}(i_\frak p)$ is trivial for all
primes $\frak p\neq v$, where $i_\frak p$ is regarded as an idele
whose $\frak p$-component is $i_\frak p$ and 1 otherwise. Since
$N_{E/\Q}(i_v)=N_{E_v/\Q_2}(i_v)=-1$, one gets
$$\psi_{\Theta/E}(\alpha i)=\psi_{\Theta/E}(i)=\psi_{\Theta/E}(i_v)=-1$$
by Lemma \ref{computation}. This contradicts $u \in ker \lambda_E$.

Therefore $N_{E/\Bbb Q}(\alpha)=N_{E/\Bbb Q}(i)=1$. This implies
that $\alpha\in SO(V)$ and $i\in SO_\A(L)$. One concludes that
 $\lambda_E$ is injective. The
result follows from Corollary \ref{efmulti}.
\end{proof}

Finally we will use Proposition \ref{extra} to give an explicit
example. For any integer $n$, one can write $n=(-1)^{s_0}
2^{s_1}17^{s_2}{p_1}^{e_1}\cdots {p_g}^{e_g}$ and $D(n)=\{p_1,
\cdots, p_g \}$. Decompose $D(n)$ into the disjoint union of the
following subsets
$$\aligned & D_1=\{p\in D(n) : \ (\frac{2}{p})=(\frac{17}{p})=-1  \}  \ \ \text{and} \ \ D_2=\{p\in D(n): \
(\frac{34}{p})=-1 \} \cr
 & D_3= \{p\in D(n): \
(\frac{2}{p})=(\frac{17}{p})=1 \ \text{and} \ x^4-12x^2+2 \equiv 0 \
mod \ p \ \ \text{is solvable} \} \cr &  D_4= \{p\in D(n): \
(\frac{2}{p})=(\frac{17}{p})=1 \ \text{and}\ x^4-12x^2+2 \equiv 0 \
mod \ p \ \ \text{is not solvable} \} . \endaligned $$ Let
$$n_1=(-1)^{s_0} \prod_{p_i\in D(n)\setminus D_2} p_i^{e_i} .$$

\begin{exa} \label{pell} Let $n$ be an integer with the above notation.
Then the equation $$x^2-34y^2=n$$ is solvable over $\Bbb Z$ if and
only if

(1)  $n_1\equiv \pm 1 \mod 8$, $(\frac{n_1}{17})=1$ and
$(\frac{34}{p_i})=1$ for odd $e_i$

(2)  $D_1\neq \emptyset$; or $$\sum_{p_i\in D_4} e_i \equiv
\begin{cases} s_2 \mod 2 \ \ \ & \text{if $n_1\equiv 1, -9 \mod 16$} \cr
 s_2+1 \mod 2 \ \ \ & \text{if $n_1\equiv -1, 9 \mod 16$} \end{cases} $$ for $D_1=\emptyset$.
\end{exa}

\begin{proof} Let $\bold X$ be the scheme over $\Bbb Z$ defined by the equation.
By \cite{O58} for odd primes $p$ and \cite{Ko73} for $p=2$, one has
the condition (1) is equivalent to $\prod_{p\leq \infty} \bold
X(\Bbb Z_p)\neq \emptyset$.

Since $34=5^2+3^2$, Proposition \ref{extra} can be applied. Moreover
the Hilbert class field $H$ of $E=\Bbb Q(\sqrt{34})$ is $\Bbb
Q(\sqrt{2},\sqrt{17})$. Since the equation $x^2-34y^2=2$ has an
integral solution for $x=6$ and $y=1$, one can choose
$\Theta=E(\sqrt{6-\sqrt{34}})$. For simplicity, we will denote both
$Gal(\Theta/E)$ and $Gal(H/E)$ as $\mu_2=\{\pm 1 \}$.

Define $$\tilde f_E[(x_p,y_p)]= \begin{cases} (x_p+y_p \sqrt{34},
x_p-y_p\sqrt{34}) \ \ \
& \text{if $p$ splits in $E/\Bbb Q$} \\
x_p+y_p\sqrt{34} \ \ \ & \text{otherwise}
\end{cases} $$
for any $(x_p,y_p)\in \bold X(\Bbb Z_p)$.

Then
$$\psi_{H/E}(\tilde f_E[(x_p,y_p)])= \begin{cases}  (-1)^{e_i}  \ \ \ & \text{if
$p\in D_1$} \cr 1 \ \ \ & \text{otherwise.}
\end{cases} $$ Since $n_1\equiv \pm 1 \mod 8$ by
the local condition (1), one always has $\sum_{p_i\in D_1}e_i \equiv
0 \mod 2$. This implies that
$$\psi_{H/E}(\tilde f_E[(x_p,y_p)_{p\leq \infty}])=1 \ \ \ \text{for
any  \ $(x_p,y_p)_{p\leq \infty}\in \prod_{p\leq \infty} \bold
X(\Bbb Z_p)$}.
$$

Next we compute $\psi_{\Theta/E}$. It is clear that
$\psi_{\Theta/E}(\tilde f_E[(x_p,y_p)])=1$ for $p\nmid 34n$.

If $p\in D_1$, then $p=\frak P\bar{\frak P}$ over $E$. Moreover, one
of $\frak P$ and $\bar{\frak P}$ splits in $\Theta/E$ and the other
is inert in $\Theta/E$. Without loss of generality, we assume that
$\frak P$ is inert in $\Theta/E$. Then
$$\psi_{\Theta/E}(\tilde f_E[(x_{p},y_{p})])=(-1)^{a}  \ \ \
\text{with} \ \ \ a=ord_{\frak P}(x_{p}+y_{p}\sqrt{34}). $$

If $p\in D_3$, then $p$ splits completely in $\Theta/\Q$ and
$\psi_{\Theta/E}(\tilde f_E[(x_{p},y_{p})])=1$.

If $p\in D_4$, then $p=\frak P\bar{\frak P}$ over $E$ and both
$\frak P$ and $\bar{\frak P}$ are inert in $\Theta/E$. One gets
$$\psi_{\Theta/E}(\tilde f_E[(x_p,y_p)])=(-1)^e$$ where $e$ is the
exponent of $p$ inside $n$.

If $p\in D_2$, the exponent $e$ of $p$ inside $n$ satisfies $e\equiv
0 \mod 2$ by the condition (1) and $p$ is inert in $E/\Q$. Moreover,
the prime $\frak P$ above $p$ in $E$ splits in $\Theta/E$ if and
only if $6-\sqrt{34}$ is a square in $E_\frak P$. By the Hensel's
lemma, this is equivalent to $(6-\sqrt{34}, p)_\frak P=(2,p)_p=1$.
Since $ord_\frak P(x_p+y_p \sqrt{34})=\frac{1}{2}ord_p(n)$, one has
$$\psi_{\Theta/E}(\tilde
f_E[(x_p,y_p)])=(\frac{2}{p})^{\frac{1}{2}e}. $$

We summarize
$$\psi_{\Theta/E}(\tilde f_E[(x_p,y_p)])= \begin{cases} 1 \ \ \ & \text{if
$(p,34n)=1$ or $p\in D_3$} \cr (-1)^{s_2} \ \ \ & \text{if $p=17$}
\cr
 (-1)^{a}  \ \ \ & \text{if
$p\in D_1$} \cr (\frac{2}{p})^{\frac{1}{2}e} \ \ \ & \text{if $p\in
D_2$} \cr (-1)^{e} \ \ \ & \text{if $p\in D_4$}
\end{cases} $$ for any $(x_p,y_p)\in \bold X(\Bbb Z_p)$.

When $D_1\neq \emptyset$, there are two local solutions
$(x_{p},y_{p})\in \bold X(\Bbb Z_{p})$ such that $a=0$ and $1$
respectively for any $p\in D_1$ by the Hensel's Lemma. This implies
that there is $(x_p,y_p)_{p\leq \infty}\in \prod_{p\leq \infty}
\bold X(\Bbb Z_p)$ such that
$$\psi_{\Theta/E}(\tilde f_E[(x_p,y_p)_{p\leq
\infty}])=1 . $$

When $D_1=\emptyset$, one needs further to compute
$\psi_{\Theta/E}(\tilde f_E[(x_2,y_2)])$. Since there is $\delta\in
\Bbb Z_2^\times $ with
$$ \delta \equiv \begin{cases} \pm 1 \mod 8  \ \ \ & \text{if
$n_1\equiv \pm 1 \mod 16$} \cr \pm 3 \mod 8 \ \ \ & \text{if
$n_1\equiv \pm 9 \mod 16$} \end{cases} $$ such that
$$\delta^2 = \begin{cases} 17^{s_2}n_1 \ \ \ & \text{if $n_1\equiv
1, \ 9 \mod 16$} \cr -17^{s_2} n_1 \ \ \ & \text{if $n_1 \equiv -1,
\ -9 \mod 16$ } \end{cases} $$ by the Hensel's lemma, one has
$$N_{E_v/\Bbb Q_2}(\beta^{s_1}\delta \prod_{p_i\in D_2}p_i^{\frac{1}{2}e_i})=\begin{cases} n \ \ \ &
\text{if $n_1 \equiv 1, \ 9 \mod 16$} \cr -n \ \ \ & \text{if $n_1
\equiv -1, \ -9 \mod 16$ } \end{cases} $$ where $v$ is the unique
prime of $E$ above $2$ and $\beta=6-\sqrt{34}$. For any
$(x_2,y_2)\in \bold X(\Bbb Z_2)$, one has
$$\psi_{\Theta/E}(\tilde f_E[(x_2,y_2)])=\begin{cases}
(\beta^{s_1}\delta \prod_{p_i\in D_2}p_i^{\frac{1}{2}e_i},\beta)_v \
\ \ & \text{if $n_1\equiv 1, \ 9 \mod 16$} \cr -(\beta^{s_1}\delta
\prod_{p_i\in D_2}p_i^{\frac{1}{2}e_i},\beta)_v \ \ \ & \text{if
$n_1 \equiv -1, \ -9 \mod 16$ }
\end{cases} $$
by the computation in Lemma \ref{computation}. Since
$$ \aligned & (\beta^{s_1}\delta \prod_{p_i\in D_2}p_i^{\frac{1}{2}e_i},\beta)_v
=(\beta,\beta)_v^{s_1}(\delta \prod_{p_i\in
D_2}p_i^{\frac{1}{2}e_i}, \beta)_v  \cr = &
(\beta,-1)_v^{s_1}(\delta \prod_{p_i\in D_2}p_i^{\frac{1}{2}e_i},
2)_2=(2,-1)_2^{s_1}(\delta \prod_{p_i\in
D_2}p_i^{\frac{1}{2}e_i},2)_2 \cr = & (\delta,2)_2 \prod_{p_i\in
D_2}(\frac{2}{p_i})^{\frac{1}{2}e_i}  =
\begin{cases} \prod_{p_i\in D_2}(\frac{2}{p_i})^{\frac{1}{2}e_i} \ \ \ & \text{if
$n_1\equiv \pm 1 \mod 16$} \cr -\prod_{p_i\in
D_2}(\frac{2}{p_i})^{\frac{1}{2}e_i} \ \ \ & \text{if $n_1\equiv \pm
9 \mod 16$,} \end{cases} \endaligned $$ one concludes that
$$\psi_{\Theta/E}(\tilde f_E[(x_2,y_2)])=\begin{cases} \prod_{p_i\in D_2}(\frac{2}{p_i})^{\frac{1}{2}e_i} \ \ \ & \text{if
$n_1\equiv 1, \ -9 \mod 16$} \cr -\prod_{p_i\in
D_2}(\frac{2}{p_i})^{\frac{1}{2}e_i} \ \ \ & \text{if $n_1\equiv -1,
\ 9 \mod 16$ }\end{cases} $$ for any $(x_2,y_2)\in \bold X(\Bbb
Z_2)$. By Proposition \ref{extra}, $\bold X(\Bbb Z)\neq \emptyset$
if and only if
$$\prod_{p_i\in D_4}(-1)^{e_i}
\prod_{p_i\in D_2}(\frac{2}{p_i})^{\frac{1}{2}e_i}
=\begin{cases} (-1)^{s_2} \prod_{p_i\in D_2}(\frac{2}{p_i})^{\frac{1}{2}e_i} &\text{if } n_1\equiv 1, -9\mod 16 \\
 (-1)^{s_2+1}\prod_{p_i\in D_2}(\frac{2}{p_i})^{\frac{1}{2}e_i} & \text{if } n_1\equiv -1, 9\mod 16 . \end{cases}$$
The proof is complete.
\end{proof}

\section{Examples for Higher Dimensional Tori}

In this section, we further explain how effective the method in \S
\ref{sec.adm} is for high dimension tori by providing the explicit
examples. Fix an integral solution $(x_0,y_0,z_0)$ of the equation
$x^2-2ly^2=2z^2$ for $l\equiv 1 \mod 8$ such that $x_0>0$ and
$(x_0,y_0,z_0)=1$. Let
$$L=\Q(\sqrt{2l}) \ \ \ \text{and} \ \ \ \Theta=L(\sqrt{x_0-y_0\sqrt{2l}}). $$ Then $\Theta$
is totally real and $\Theta/L$ is unramified over all primes except
the prime above $2$ and $2$ is totally ramified in $\Theta/\Bbb Q$.

\begin{prop}\label{higher} Let $l$ be a prime with $l\equiv 1 \mod 8$
satisfying $2l=r^2+s^2$ for two integers $r$ and $s$ with $s \equiv
\pm 3 \mod 8$ and $\Theta$ and $L$ be as above. Suppose $E$ is a
field containing $L$ and $\bold X$ is the scheme over $\Z$ defined
by the equation
$$f(x_1,\dots,x_k)=N_{E/\Q}(x_1e_1+\cdots+x_se_k)= n$$ where $\{e_1,\dots,e_k\}$ is a basis of
$\frak o_E$ over $\Z$ and $n$ is an integer. Then $\bold X(\Bbb
Z)\neq \emptyset$ if and only if $X_\Q(\Q)\neq \emptyset$ and there
is $$({x_1}_p,\dots, {x_k}_p)_{p\leq \infty}\in \prod_{p\leq \infty}
\bold X(\Bbb Z_p)$$ such that both
$$\psi_{H/E}[({x_1}_p e_1+\cdots+{x_k}_pe_k)_{p\leq \infty}] \ \ \ \text{and} \
\ \ \psi_{\Theta/L}[(N_{E/L}({x_1}_p e_1+\cdots+{x_k}_pe_k))_{p\leq
\infty}]
$$ are trivial, where $H$ is the Hilbert class field of $E$ and both
$\psi_{H/E}$ and $\psi_{\Theta/L}$ are the Artin maps.
\end{prop}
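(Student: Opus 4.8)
The plan is to place $\bold X$ in the framework of Section~5 with the single norm torus $G=R^1_{E/\Bbb Q}(\Bbb G_m)=\ker(N_{E/\Bbb Q})$, and then to run the argument of Theorem~\ref{mainmodel} in the two-subgroup refinement of Corollary~\ref{multiple}: one subgroup controls the ideal class of $E$ (giving the $\psi_{H/E}$ condition), and the other controls the sign obstruction coming from $\Theta$ (giving the $\psi_{\Theta/L}$ condition). First, $X_\Bbb Q(\Bbb Q)\neq\emptyset$ exactly says $n\in N_{E/\Bbb Q}(E^\times)$, so the torsor $N_{E/\Bbb Q}(\xi)=n$ under $G$ is trivial; fixing $P\in X_\Bbb Q(\Bbb Q)$ gives $X_\Bbb Q\cong G$, embeds $\prod_{p}\bold X(\Bbb Z_p)$ into $G(\Bbb A_{\Bbb Q})$, and sends a local point $(\xi_p)_p$ with $\xi_p={x_1}_pe_1+\cdots+{x_k}_pe_k$ to the idele $(\xi_p)_p\in\Bbb I_E$. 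By Remark~\ref{modify} I may ignore the twist by $P$ and work with $\prod_p\xi_p$ as written in the statement. Since $\{e_1,\dots,e_k\}$ is an integral basis of $\frak o_E$, the computation of Proposition~\ref{special} gives $\lambda_{(E)}[Stab(\bold X(\frak o_{F_\frak p}))]=\ker(N_{E_p/\Bbb Q_p}\colon\frak o_{E_p}^\times\to\Bbb Z_p^\times)$ for all $p$, whence $\lambda_{(E)}[Stab_{\Bbb A}(\bold X)]\subseteq \Xi_H:=\prod_{p<\infty}\frak o_{E_p}^\times\times\prod_{p\mid\infty}E_p^\times$, the open subgroup corresponding to the Hilbert class field $H$ of $E$.

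To introduce the second coordinate, let $v$ be the unique prime of $L$ above $2$ and $\frak V$ the prime of $\Theta$ above $v$. The essential input is Lemma~\ref{computation}: the symbol $(\,\cdot\,,x_0-y_0\sqrt{2l})_v$ on $L_v^\times$ vanishes on norm-one elements and equals $-1$ on a norm-$(-1)$ element, so $\psi_{\Theta/L}$ on $L_v^\times$ detects precisely the class of $N_{L_v/\Bbb Q_2}$. Because $N_{E/\Bbb Q}=N_{L/\Bbb Q}\circ N_{E/L}$, any $g\in Stab_{\Bbb A}(\bold X)$ has unit, norm-one local components, so $N_{E/L}(\lambda_{(E)}(g))$ is a unit idele of $L$ whose $v$-component has norm $1$ to $\Bbb Q_2$; as $\Theta/L$ is unramified outside $v$, split at the real places (both fields being totally real), and a norm at $v$ by Lemma~\ref{computation}, we get $N_{E/L}(\lambda_{(E)}[Stab_{\Bbb A}(\bold X)])\subseteq N_{\Theta/L}(\Bbb I_\Theta)$. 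Hence the map
$$\lambda\colon\ \ G(\Bbb A_{\Bbb Q})/G(\Bbb Q)Stab_{\Bbb A}(\bold X)\longrightarrow (\Bbb I_E/E^\times\Xi_H)\times(\Bbb I_L/L^\times N_{\Theta/L}(\Bbb I_\Theta)),\qquad g\mapsto(\lambda_{(E)}(g),\,N_{E/L}\lambda_{(E)}(g)),$$
is well-defined.

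The heart of the proof is the injectivity of $\lambda$. Suppose $u$ lies in the kernel, so $\lambda_{(E)}(u)=\alpha i$ with $\alpha\in E^\times$, $i\in\Xi_H$, and $\psi_{\Theta/L}(N_{E/L}\lambda_{(E)}(u))=1$. Applying $N_{E/\Bbb Q}$ and using $N_{E/\Bbb Q}(\lambda_{(E)}u)=1$ shows $N_{E/\Bbb Q}(\alpha)=N_{E/\Bbb Q}(i)^{-1}$ is a rational number that is a unit at every finite place, hence $N_{E/\Bbb Q}(\alpha)=\pm1$ and the idele $N_{E/\Bbb Q}(i)$ is diagonal. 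If $N_{E/\Bbb Q}(\alpha)=-1$, then the $v$-component of $N_{E/L}(i)$ has norm $-1$ to $\Bbb Q_2$, while $N_{E/L}(\alpha)\in L^\times$ contributes trivially by reciprocity; since $\Theta/L$ is unramified outside $v$ and split at infinity, $\psi_{\Theta/L}(N_{E/L}\lambda_{(E)}u)=\psi_{\Theta/L}((N_{E/L}i)_v)=-1$ by Lemma~\ref{computation}, contradicting the kernel hypothesis. Therefore $N_{E/\Bbb Q}(\alpha)=1$, so $\alpha\in G(\Bbb Q)$ and $i$ is a norm-one unit idele lying in $\lambda_{(E)}[Stab_{\Bbb A}(\bold X)]$, giving $u\in G(\Bbb Q)Stab_{\Bbb A}(\bold X)$. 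With $\lambda$ injective, the argument of Theorem~\ref{mainmodel}, in exactly the two-subgroup form of Corollary~\ref{multiple}, shows that $\bold X(\Bbb Z)\neq\emptyset$ if and only if some local point is carried by $\lambda$ into $E^\times\Xi_H\times L^\times N_{\Theta/L}(\Bbb I_\Theta)$, which by the class field theory isomorphisms is precisely the simultaneous vanishing of $\psi_{H/E}$ and of $\psi_{\Theta/L}(N_{E/L}\cdot)$; necessity is immediate, or follows from Remark~\ref{remark}. I expect the main obstacle to be exactly this injectivity step, namely excluding $N_{E/\Bbb Q}(\alpha)=-1$: this is not formal and rests entirely on transporting the local Hilbert-symbol computation of Lemma~\ref{computation} through the norm map $N_{E/L}$.
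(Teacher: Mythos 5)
Your proposal is correct and follows essentially the same route as the paper: identify $X_\Q\cong G=R^1_{E/\Q}(\G_m)$ via a rational point, note $\lambda[Stab_{\Bbb A}(\bold X)]$ consists of norm-one unit ideles, use Lemma \ref{computation} to show $N_{E/L}$ carries $Stab_{\Bbb A}(\bold X)$ into $N_{\Theta/L}(\Bbb I_\Theta)$, and prove injectivity of the two-coordinate map into $(\Bbb I_E/E^\times\Xi_H)\times(\Bbb I_L/L^\times N_{\Theta/L}(\Bbb I_\Theta))$ by excluding $N_{E/\Q}(\alpha)=-1$ via the Hilbert-symbol computation at $v$. The only difference is one of exposition: the paper compresses the injectivity step into a citation of ``the same argument as Proposition \ref{extra},'' whereas you write it out in full (and also make explicit the role of $X_\Q(\Q)\neq\emptyset$ and Remark \ref{modify}), which matches the intended argument exactly.
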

\begin{proof}
Since $G=R^1_{E/\Q}(\G_m)$, one has
$$Stab_\A({\bf X})= \{ (x_p)\in \prod_{p\leq \infty} \frak o_{E_p}^\times : \ N_{E/\Q}((x_p))=1 \in \Bbb I_\Q \}.$$
By the computation in Lemma \ref{computation}, one has $(\xi,
x_0-y_0\sqrt{2l})_v=1$ for any $\xi\in L_v^\times $ with
$N_{L_v/\Bbb Q_2}(\xi)=1$ where $v$ is the unique prime of $L$ above
$2$. This implies that
$$N_{E/L}(Stab({\bf X}(\Z_2))\subseteq N_{\Theta_\frak V/L_v}(\Theta_\frak
V^\times) $$ where $\frak V$ is the unique prime of $\Theta$ above
$v$. Since $\Theta/L$ is unramified over all primes except $v$, one
concludes that $$N_{E/L}(Stab_\A({\bf X}))\subseteq
N_{\Theta/L}(\Bbb I_{\Theta})$$ where $\Bbb I_{\Theta}$ is the idele
group of $\Theta$. Then the group homomorphism
$$\aligned f_E : \ G(\A_\Q)/G(\Q)Stab_\A({\bf X}) & \longrightarrow [\Bbb I_L/(L^\times  N_{\Theta/L}(\Bbb
I_{\Theta}))] \times [\Bbb I_E/(E^\times \prod_{p\leq \infty} \frak
o_{E_p}^\times)] \\(x_p)_{p\leq \infty} & \mapsto (N_{E/L}(x_p),
(x_p))_{p\leq \infty} \endaligned $$ is well-defined. By Lemma
\ref{computation} and the same argument in Proposition \ref{extra},
one obtains that $f_E$ is injective and the result follows.
\end{proof}

For any integer $n$, one can write $$n=(-1)^{s_0}
2^{s_1}17^{s_2}{p_1}^{e_1}\cdots {p_g}^{e_g} \ \ \ \text{and} \ \ \
D(n)=\{p_1, \cdots, p_g \} .$$ Decompose $D(n)$ into the disjoint
union of the following subsets
$$\aligned & D_1=\{p\in D(n) : \ (\frac{2}{p})=(\frac{17}{p})=-1  \}  \ \ \text{and} \ \ D_2=\{p\in D(n): \
(\frac{34}{p})=-1 \} \cr
 & D_3= \{p\in D(n): \
(\frac{2}{p})=(\frac{17}{p})=1 \ \text{and} \ x^4-12x^2+2 \equiv 0 \
mod \ p \ \ \text{is solvable} \} \cr &  D_4= \{p\in D(n): \
(\frac{2}{p})=(\frac{17}{p})=1 \ \text{and}\ x^4-12x^2+2 \equiv 0 \
mod \ p \ \ \text{is not solvable} \} . \endaligned $$ Let
$$n_1=(-1)^{s_0} \prod_{p_i\in D(n)\setminus D_2} p_i^{e_i} .$$
One can have one more explicit example.

\begin{exa} Let $n$ be an integer with the above notation and
$E=\Q(\sqrt{5},\sqrt{34})$. Then $n\in N_{E/\Q}(\frak o_E)$ if and
only if

(1)  $s_1\equiv s_2\equiv 0 \mod 2$,  $n_1\equiv \pm 1 \mod 8$,
$(\frac{n_1}{17})=1$ and $(\frac{34}{p_i})=(\frac{5}{p_i})=1$ for
odd $e_i$

(2)  $D_1\neq \emptyset$; or $$\sum_{p_i\in D_4} e_i \equiv
\begin{cases} 0 \mod 2 \ \ \ & \text{if $n_1\equiv 1, -9 \mod 16$} \cr
 1 \mod 2 \ \ \ & \text{if $n_1\equiv -1, 9 \mod 16$} \end{cases} $$ for $D_1=\emptyset$.
\end{exa}
\begin{proof} We apply Proposition \ref{higher} by taking
$E=\Q(\sqrt{5},\sqrt{34})$, $L=\Q(\sqrt{34})$ and
$\Theta=L(\sqrt{6-\sqrt{34}})$. The Hilbert class field of $E$ is
$E(\sqrt{2})$.

The local condition $n\in N_{E/\Q}(\frak o_{E_p})$ for all primes
$p\leq \infty$ is equivalent to the condition (1). Since
$Gal(E/\Q)=Gal(E_v/\Q_2)$, one concludes that $n\in
N_{E/\Q}(E^\times)$ by Theorem 6.11 in \cite{PR94}.

If $(x_p)_{p\leq \infty}$ is the local solution with $x_p\in \frak
o_{E_p}$ and $N_{E/\Q}(x_p)=n$ for all $p\leq \infty$, one can
verify that $\psi_{H/E}((x_p)_{p\leq \infty})=1$. The computation of
$\psi_{\Theta/L}[(N_{E/L}(x_p))_{p\leq \infty}]$ is the same as that
in Example \ref{pell}.
\end{proof}

\bigskip

\bf{Acknowledgment} \it{ We would like to thank Jean-Louis
Colliot-Th\'el\`ene for introducing us to study the Brauer-Manin
obstruction and a lot of helpful discussion. He also pointed out a
flaw in the original version of the paper. We would also like to
thank David Harari for sending us his paper \cite{Ha08}. Both
authors would also like to thank the referee for the suggestion of
improving the paper. The work is supported by the Morningside Center
of Mathematics. The first author is supported by NSFC, grant \#
10671104 and grant \# 10901150. The second author is supported by
NSFC, grant \# 10325105 and \# 10531060. }

\begin{bibdiv}

\begin{biblist}

\bib {BCS} {article} {
   author={M.Borovoi},
   author={J.L.Colliot-Th\'el\`ene},
   author={A.Skorobogatov}
   title={The elementary obstruction and homogeneous spaces},
   journal={Duke Math. J.},
   volume={141},
   date={2008},
   number={2},
   Pages={321 \ndash 364},
}

\bib {BR95} {article} {
   author={M.Borovoi},
   author={Z.Rudnick},
   title={Hardy-Littlewood varieties and semisimple groups},
   journal={Invent. Math.},
   volume={119},
   date={1995},
   number={},
   Pages={37 \ndash 66},
}

\bib{Co89} {book}{author={D.A.Cox},
 title={Primes of the Form $x^2+ny^2$},
 publisher={John Wiley \& Sons, Inc.},
 date={1989},
}

\bib{CTX} {article} {
    author={J-L.Colliot-Th\'el\`ene},
    author={F. Xu},
    title={Brauer-Manin obstruction for integral points of homogeneous spaces and
         representations by integral quadratic forms},
    journal={Compositio Math.},
    volume={145},
    date={2009},
    Pages={309\ndash 363},
}

\bib{D} {article} {
    author={G.L.Dirichlet},
    title={Einige neue S$\ddot{a}$tze $\ddot{u}$ber unbestimmte Gleichungen},
    journal={ "Werke"},
    publisher={Kgl. Preuss. Akad. d. Wissensch.},
    place={Berlin},
    volume={I},
    date={1920},
    Pages={221\ndash 236},
}

\bib{Ep}{article} {
      author={P.Epstein},
      title={Zur Aufl$\ddot{o}$sbarkeit der Gleichung $x^2-Dy^2=-1$},
      journal={J. reine und angew. Math.},
      volume={171},
      date={1934}
      Pages={243 \ndash 252},
}

\bib{Ha08} {article} {
    author={D. Harari},
 title={Le d\'{e}faut d'approximation forte pour les groups alg\'{e}briques commutatifs},
  journal={Algebra and Number Theory},
    volume={2},
      date={2008},
    pages={595\ndash 611},
    number={5}
 }

\bib{HS05} {article} {
    author={D. Harari},
    author={T. Szamuely},
 title={Arithmetic duality theorems for 1-motives},
  journal={J. Reine Angew. Math. (Crelle)},
    volume={578 },
      date={2005},
    pages={93\ndash 128},
    number={}
 }

\bib{HS08} {article} {
    author={D. Harari},
    author={T. Szamuely},
 title={Local-global principles for 1-motives},
  journal={Duke Math. J. },
    volume={143},
    number={3}
      date={2008},
    pages={531-557},
 }

\bib{Ko73} {article} {
    author={O.Koerner},
 title={Integral representations over local fields and the number of genera of quadratic forms},
  journal={Acta Arith.},
    volume={XXIV},
    number={}
      date={1973},
    pages={301-311},
 }

\bib{Milne80}{book}{
    author={ J. S. Milne},
     title={Etale cohomology},
       volume={ },
     publisher={Princeton Press},
     place={},
      date={1980},
   journal={ },
    series={},
    volume={},
    number={ },
}

\bib{Milne86}{book}{
    author={ J. S. Milne},
     title={Arithmetic duality theorems},
       volume={ },
     publisher={Academic Press},
     place={},
      date={1986},
   journal={ },
    series={Perspectives in Mathematics},
    volume={1},
    number={ },
}

\bib{NSW}{book}{
    author={ J.Neukirch},
    author={A.Schmidt},
    author={K.Wingberg},
    title={Cohomology of Number Fields},
    volume={323},
    publisher={Springer},
    series={Grundlehren},
    date={2000},
}

\bib{O58}{article}{
   author={O.T.O'Meara},
   title={The integral representations of quadratic forms over local
   fields}
   journal={Amer. J. Math.},
    volume={80},
    number={}
      date={1958},
    pages={843-878},
 }

\bib{O73}{book}{
   author={O.T.O'Meara},
   title={Introduction to Quadratic Forms},
   publisher={Springer -Verlag},
   date={1973},
}

\bib{PR94}{book}{
    author={V. P. Platonov},
    author={A. S. Rapinchuk},
     title={Algebraic groups and  number theory},
     publisher={Academic Press},
     place={},
      date={1994},
    volume={ },
    number={ },
}

\bib{Re}{article} {
     author={L.R$\acute{e}$dei},
     title={$\ddot{U}$ber die Pellsche Gleichung $t^2-du^2=-1$},
     journal={J. reine und angew. Math.},
     volume={173},
     date={1935}
     Pages={193 \ndash 221},
}

\bib {Sh71}{book}{author={ G. Shimura},
    title={Arithmetic Theory of Automorphic Functions},
    publisher={Princeton University Press}
    place={},
    jornal={},
    series={},
    date={1971}
    number={},
    pages={},
}

\bib {Sko}{book}{
    author={A. N. Skorobogatov},
     title={Torsors and rational points},
     publisher={Cambridge University Press},
     place={},
      journal={ },
            series={Cambridge Tracts in Mathematics},
    volume={144},
    date={2001},
    number={ },
     pages={},
}

\bib{Vo98}{book}{
    author={V.E.Voskresenskii},
     title={Algebraic Groups and Their Birational Invariants},
     publisher={Amer.Math.Soc.},
     place={},
     series={Translations of Mathematical Monographs},
      date={1998},
    volume={179 },
    number={ },
}
\end{biblist}
\end{bibdiv}

\end{document}